\documentclass[a4paper]{amsart}
\usepackage{amssymb}
\usepackage{amsmath,amsthm}
\usepackage{enumerate,paralist}
\usepackage{amstext}
\usepackage{psfrag}
\usepackage{dsfont}
\usepackage[dvips]{epsfig}
\usepackage[english]{babel}
\usepackage{color}
\usepackage{mathrsfs}
\usepackage{footnote}

\theoremstyle{plain}
\newtheorem{theorem}{Theorem}[section]

\newtheorem{lemma}[theorem]{Lemma}
\newtheorem{proposition}[theorem]{Proposition}
\theoremstyle{definition}
\newtheorem{definition}[theorem]{Definition}

\newtheorem{remark}[theorem]{Remark}

\renewcommand{\Re}{\ensuremath{{\rm Re\,}}}

\renewcommand{\leq}{\leqslant}
\renewcommand{\geq}{\geqslant}

\newcommand{\real}{\mathds R}
\newcommand{\rn}{{{\mathds R}^d}}

\newcommand{\ffi}{\varphi}
\newcommand{\eps}{\varepsilon}

\newcommand{\supp}{\mathop{\mathrm{supp}}\nolimits}
\newcommand{\id}{\mathop{\mathrm{Id}}\nolimits}

\def\tr{\mbox{\rm tr\hspace{.5mm}}}

\def\sbs{\subset}

\def\liml{\lim\limits}
\def\suml{\sum\limits}
\def\intl{\int\limits}
\def\prodl{\prod\limits}

 \def\vol{\text{\rm vol}}
\def\supp{\text{\rm supp}}

\def\Dom{\text{\rm Dom}}
\def\D{\text{\rm Dom}}
\def\Hess{\text{\rm Hess}}

\def\dim{\text{\rm dim}}
 \def\any{\forall}

\def\ffi{\varphi}

\def\cR{{\mathbb R}}
\def\cC{{\mathbb C}}

\def\<{\left\langle}
\def\>{\right\rangle}

\def \NN{\mathbb{N}}

\definecolor{kb}{rgb}{0.1,0.5,0.1}
\definecolor{yk}{rgb}{0.1,0.2,0.7}
\definecolor{ks}{rgb}{0.7,0.1,0.2}

\begin{document}

\title[Chernoff approximation of subordinate semigroups]{Chernoff approximation of subordinate semigroups and applications}

\subjclass[2010]{primary: 47D06, 47D07, 60J35, Secondary: 47D08}

\keywords{approximation of evolution semigroups, approximation of transitional probability, the Chernoff Theorem, Feynman formula,  subordination, subordinate semigroups, Feller processes, subordinate diffusions on graphs and manifolds}

\thanks{I would like to  thank Ren\'{e} Schilling for  attracting my attention to the problem of Chernoff approximation of subordinate semigroups and  for  stimulating discussions in Bonn 2011. I also would like to thank Martin Fuchs for his  encouragement and support  of my researches. }

\author{Yana A. Butko}
\address{University of Saarland,  P.O. Box 15 11 50, D-66041 Saarbr\"{u}cken, Germany}
\email{yanabutko@yandex.ru, kinderknecht@math.uni-sb.de}

\date{\today}

\begin{abstract}
In this note the Chernoff Theorem is used to approximate evolution semigroups constructed  by the procedure of subordination.  The considered semigroups are subordinate to some original, unknown explicitly but already approximated by the same method, counterparts with respect to subordinators either with known transitional probabilities, or with known and bounded L\'{e}vy measure. 
 These results are applied  to obtain approximations of semigroups corresponding to  subordination of Feller  processes, and  (Feller type)  diffusions in  Euclidean spaces,  star graphs and Riemannian manifolds. The obtained approximations are based on explicitly given operators and hence can be used for direct calculations and computer modelling. In several cases the obtained approximations are given as  iterated integrals of elementary functions and  lead to representations of the considered semigroups by Feynman formulae.

\end{abstract}
\maketitle

 \tableofcontents

\section{Introduction}\label{sec:intro}
The problem to construct  a semigroup $\left(e^{tL}\right)_{t\geq0}$ with  a given generator $L$ (on a given Banach space) is very important for many applications. In one hand,  the semigroup  $e^{tL}$ allows to solve an initial (or  initial-boundary) value problem for the corresponding evolution equation $\frac{\partial f}{\partial t}=Lf$. On the other hand, the semigroup $e^{tL}$ defines the transition probability of the underlying stochastic process. One of the ways to construct strongly continuous semigroups is given by the procedure of subordination. From  two ingredients: an original strongly continuous contraction semigroup $(T_t)_{t\geq0}$ and a convolution semigroup $(\eta_t)_{t\geq0}$ supported by $[0,\infty)$  (see all   definitions in Sec.~2) this procedure produces  the strongly continuous contraction semigroup $(T^f_t)_{t\geq0}$  with  $T^f_t:=\int_0^\infty T_s\eta_t(ds)$. If the semigroup $(T_t)_{t\geq0}$ corresponds to a stochastic process $(X_t)_{t\geq0}$, then  subordination is a random time-change of $(X_t)_{t\geq0}$ by an independent increasing L\'{e}vy process (subordinator) related to $(\eta_t)_{t\geq0}$. If $(T_t)_{t\geq0}$ and  $(\eta_t)_{t\geq0}$ both are known explicitly, so is $(T^f_t)_{t\geq0}$. But if, e.g., $(T_t)_{t\geq0}$ is not known, neither $(T^f_t)_{t\geq0}$ itself, nor even the generator of $(T^f_t)_{t\geq0}$ are known explicitly any more.

Sure, if a desired semigroup is unknown, it  must be  approximated.  One of the methods to approximate evolution semigroups is based on the Chernoff Theorem. This theorem provides conditions for a family (just a family, not a semigroup!) of bounded linear operators $(F(t))_{t\geq0}$ to  approximate a  semigroup $(e^{tL})_{t\geq0}$ with a given generator $L$  via the formula $e^{tL}=\lim\limits_{n\to\infty}\left[  F(t/n) \right]^n$. This formula is called  \emph{Chernoff approximation of the semigroup $e^{tL}$ by the family $(F(t))_{t\geq0}$} and this  family is called  \emph{Chernoff equivalent} to the  semigroup $e^{tL}$. The most important condition of the Chernoff Theorem  is the coincidence of the derivative of $F(t)$ at $t=0$  with the generator $L$.

Chernoff approximation has the following advantage: if the family $(F(t))_{t\geq0}$ is given explicitly,  the expressions $\left[  F(t/n) \right]^n$ can be directly used for  calculations and hence for approximation of solutions of corresponding evolution equations, for computer modelling of considered dynamics, for approximation of transition probabilities of underlying stochastic processes and hence for simulation of these processes.  Moreover, if all operators $F(t)$ are integral operators with elementary kernels, the identity  $e^{tL}=\lim\limits_{n\to\infty}\left[  F(t/n) \right]^n$ leads to  representation of the semigroup $e^{tL}$ by  limits of  $n$-fold iterated  integrals of elementary functions when $n$ tends to infinity.  Such representations are called \emph{Feynman formulae}. The limits in Feynman formulae usually coincide with   functional (or path) integrals with respect to probability measures (Feynman-Kac formulae) or with respect to Feynman pseudomeasures (Feynman path integrals).  Feynman-Kac formulae allow to investigate the considered evolution, e.g., by the method of Monte Carlo, Feynman path integrals are an important tool in quantum physics. Therefore,  representations of evolution semigroups by Feynman formulae provide additional advantages and, in particular,   allow to establish new Feynman-Kac formulae, to investigate relations between different functional integrals,  to  develop the mathematical apparatus of Feynman path integrals and to calculate functional  integrals numerically.

 One further advantage of Chernoff approximation is the fact that this method is applicable for a broad class of evolution semigroups corresponding to different types of dynamics on different geometrical structures   (see, e.g. \cite{MR2863557}, \cite{MR2423533}, \cite{Yana_Technomag14},      \cite{MR2766564}, \cite{MR2963683}, \cite{MR1927359}, \cite{MR2276523} and references therein). This method, however, has never before been applied to approximation of subordinate semigroups. And the reason is already mentioned above:  if the original semigroup $(T_t)_{t\geq0}$ is not known explicitly then  the  generator of the  subordinate  to $(T_t)_{t\geq0}$ semigroup $(T^f_t)_{t\geq0}$   is not known  explicitly too. This impedes the construction of a family $(F(t))_{t\geq0}$  with a prescribed  (but unknown explicitly) derivative at $t=0$. This difficulty is overwhelmed in the present note by construction of families $(\mathcal{F}(t))_{t\geq0}$ and $(\mathcal{F}_\mu(t))_{t\geq0}$ (they are defined in Sec.~3) which incorporate approximations of the generator of $(T^f_t)_{t\geq0}$   itself.
 
 In this note  the semigroup $(T^f_t)_{t\ge0}$ subordinate   to a given semigroup $(T_t)_{t\ge0}$ with respect to a given subordinator is considered.  It is assumed  that the  subordinator  is known explicitly, i.e. either its transition probability is known, or its L\'{e}vy measure is known and bounded. Chernoff  approximations of the subordinate   semigroup $(T^f_t)_{t\ge0}$ are constructed in the case, when the semigroup $(T_t)_{t\ge0}$ is not known explicitly but is  already (Chernoff) approximated by a given  family $(F(t))_{t\ge0}$. These general results are applied further to obtain approximations of semigroups corresponding to  subordination of Feller  processes, and  (Feller type)  diffusions in  Euclidean spaces,  star graphs and Riemannian manifolds. Some of the obtained Chernoff approximations turn out to be Feynman formulae.

\section{Notations and Preliminaries} \label{sec:prelim}
\subsection{Subordination of semigroups}\label{subsec:Subordin}

We follow the exposition of the book \cite{MR1873235} 
in this subsection.
Let  $(X,\|\cdot\|_X)$ be  a  Banach space,  $\mathcal{L}(X)$ be the space of all continuous linear operators on $X$ equipped with the  topology of strong operator convergence,  $\|\cdot\|$ denote the operator norm on $\mathcal{L}(X)$ and $\id$ be the identity operator in $X$. The symbol  $\D(L)$ denotes the domain of a linear operator $L$ in $X$, i.e.  $L: \D(L) \to X$.  A one-parameter family $(T_t)_{t\geq0}$ of bounded linear operators $T_t\,:X\to X$ is called a \emph{strongly continuous semigroup},  if  $T_0=\id$, $T_{s+t}=T_s\circ T_t$ for all $s,t\geq 0$ and $\lim_{t\to0}\|T_t\varphi-\varphi\|_X=0$ for all $\varphi\in X$. 
The semigroup $(T_t)_{t\geq 0}$ is called a \emph{contraction semigroup} if $\|T_t\|\leq1$  for all $t\geq 0$.  A family of bounded Borel measures $(\eta_t)_{t\geq 0}$ is called \emph{convolution semigroup} on $\cR^d$ if $\eta_t(\cR^d)\leq1$ for all $t\geq0$,  $\eta_s*\eta_t=\eta_{s+t}$ for all $s, t\geq0$, $\eta_0=\delta_0$ and $\eta_t\rightarrow\delta_0$ weakly (cf. Def. 3.6.1, Theo 2.3.7 and Lem. 3.6.2 of \cite{MR1873235}) 
as $t\to0$, where $\delta_0$ is the Dirac delta-measure concentrated at zero, and $\eta_s*\eta_t$ is the convolution of two measures. Each convolution semigroup $(\eta_t)_{t\geq0}$ on $\cR^d$ defines a strongly  continuous contraction semigroup $(S^\eta_t)_{t\geq 0}$ on the Banach space $C_\infty(\cR^d, \|\cdot\|_\infty)$ of continuous on $\cR^d$ functions vanishing at infinity equipped  with the supremum-norm $\|\cdot\|_\infty$ by the rule 
\begin{equation}\label{eq:S^eta}
S^\eta_t\ffi(x):=\intl_{\cR^d}\ffi(x+y)\eta_t(dy),\quad\quad \any\,\ffi\in C_\infty(\cR^d, \|\cdot\|_\infty).
\end{equation}
Let $(\eta_t)_{t\geq 0}$ be a convolution semigroup of measures on $\cR$. It is said to be supported by $[0,\infty)$ if $\supp\, \eta_t\sbs[0,\infty)$ for all $t\geq 0$.  Each convolution semigroup $(\eta_t)_{t\geq 0}$ supported by $[0,\infty)$ corresponds to a   \emph{Bernstein function} $f$ via the Laplace transform $\mathcal{L}$:  $\mathcal{L}[\eta_t](x)=e^{-tf(x)}$ for all  $x>0$ and $t>0$. Each Bernstein function $f$ is uniquely defined by a triplet $(\sigma,\lambda,\mu)$, where constants $\sigma,\lambda\geq0$ and $\mu$ is a Radon measure  on $(0,\infty)$ with $\int_{0+}^\infty\frac{s}{1+s}\mu(ds)<\infty$,
 through the representation
\begin{equation}\label{eq:BernsteinFunc}
f(z)=\sigma+\lambda z+\intl_{0+}^\infty (1-e^{-sz})\mu(ds),\quad\quad\any\,z\,: \Re z\geq 0.
\end{equation}
Note that   $\eta_t(\cR)=1$, $\forall\, t\geq0$, if and only if $\sigma=0$ (i.e., there is no "killing", cf. \cite{MR3156646}).

Let $(T_t)_{t\geq 0}$ be a strongly continuous contraction semigroup on a Banach space $(X,\|\cdot\|_X)$ and $(\eta_t)_{t\geq 0}$ be a convolution semigroup on $\cR$ supported by $[0,\infty)$ with the associated Bernstein function $f$. The family of operators $(T^f_t)_{t\geq 0}$ defined on $X$ by the Bochner integral
\begin{equation}\label{eq:subordination}
T^f_t\ffi:=\intl_0^\infty T_s\ffi\,\eta_t(ds)
\end{equation}
is again a strongly continuous contraction semigroup on $X$. The semigroup $(T^f_t)_{t\geq 0}$ is called \emph{subordinate} (in the sense of Bochner)  to $(T_t)_{t\geq 0}$ with respect to $(\eta_t)_{t\geq 0}$.

Each convolution semigroup $(\eta_t)_{t\geq 0}$ corresponds to a L\'{e}vy process $(\xi_t)_{t\geq 0}$  via $\ffi*\eta_t(x)=\mathbb{E}[\ffi(x-\xi_t)]$. If a convolution semigroup $(\eta_t)_{t\geq 0}$ is supported by $[0,\infty)$ then the corresponding L\'{e}vy process  $(\xi_t)_{t\geq 0}$ has non-decreasing paths almost surely and is called \emph{subordinator}. Such processes can be used for a time-change of another processes. Namely, if $(X_t)_{t\geq 0}$ is a (decent) Markov   process then  the \emph{subordinate process} $(X_{\xi_t})_{t\geq 0}$  ($X_{\xi_t}(\omega):=X_{\xi_t(\omega)}(\omega)$) is again a (decent) Markov   process.  E.g., if $(X_t)_{t\geq 0}$ is a Feller process  then $(X_{\xi_t})_{t\geq 0}$ is again  a Feller process (see Definition in Sect~4.1).
If $(T_t)_{t\geq 0}$ is the strongly continuous contraction semigroup corresponding to $(X_t)_{t\geq 0}$, i.e. $T_t\ffi(x)=\mathbb{E}[\ffi(x+X_t)]$,  and $(\eta_t)_{t\geq 0}$ is the convolution semigroup of the subordinator $(\xi_t)_{t\geq 0}$  then the defined in \eqref{eq:subordination}  subordinated semigroup $(T^f_t)_{t\geq 0}$  corresponds to the subordinate   process $(X_{\xi_t})_{t\geq 0}$. Many interesting processes  (see $\S 4.4$ of \cite{MR2042661}) are obtained from the Brownian motion via subordination.

Let  $(T_t)_{t\geq 0}$  be  a strongly continuous semigroup on a Banach space $(X,\|\cdot\|_X)$. Its  generator $L$ 
is defined by
 \begin{align*}
 &L\varphi:=\liml_{t\searrow0}\frac{T_t\varphi-\varphi}{t} \quad\text{with the domain}\\
 &
  \D(L):=\bigg\{ \varphi\in X \,\bigg| \quad\liml_{t\searrow0}\frac{T_t\varphi-\varphi}{t}\quad\text{\rm exists in } X \bigg\}.
 \end{align*}
Consider, in particular,   the given by \eqref{eq:S^eta} operator semigroup $(S^\eta_t)_{t\geq q0}$ on $C_\infty(\cR)$ corresponding to a supported by $[0,\infty)$ convolution semigroup $(\eta_t)_{t\geq 0}$. Assume that the corresponding Bernstein function is given by a triplet $(0,0,\mu)$. Then the generator $(L^\eta,\Dom(L^\eta))$ of  $(S^\eta_t)_{t\geq0}$   has the following properties: $C^\infty_c(\cR)\subset \Dom(L^\eta)$ and  for all $\ffi\in \Dom(L^\eta)$  
\begin{align}\label{eq: L^eta}
L^\eta\ffi(x)=\intl_{0+}^\infty (\ffi(x+s)-\ffi(x))\mu(ds).
\end{align} 
Let now $(T_t)_{t\geq 0}$  be a strongly continuous contraction semigroup with the generator $(L,\D(L))$ and $f$ be a Bernstein function  given by  the representation \eqref{eq:BernsteinFunc}  with associated convolution semigroup $(\eta_t)_{t\geq 0}$ supported by $[0,\infty)$. Then   $\D(L)$ is a core for  the generator $L^f$ of the subordinate semigroup $(T^f_t)_{t\geq 0}$  and for $\ffi\in\D(L)$ the operator $L^f$ has the representation
\begin{equation}\label{eq:L^f}
L^f\ffi=-\sigma\ffi+\lambda L\ffi+\intl_{0+}^\infty (T_s\ffi-\ffi)\mu(ds).
\end{equation} 
Note  that if a linear subspace $D\sbs X$ is a core for $L$, then $D$ is also a core for $L^f$ (see \cite{MR1739520}, Prop. 32.5, p. 215).

For each convolution semigroup $(\eta_t)_{t\geq 0}$  the corresponding operator semigroup $(S^\eta_t)_{t\geq 0}$ extends to a contraction semigroup $(\bar {S}^\eta_t)_{t\geq0}$ on the space $B_b(\cR^d)$ of all bounded Borel functions on $\cR$. This semigroup belongs to the class of   \emph{strong Feller} semigroups\footnote{ A semigroup $(T_t)_{t\geq 0}$ is called  \emph{strong Feller } if it is a   positivity preserving sub-Markovian semigroup on $B_b(\cR^d)$ (i.e. $0\leq T_t\ffi\leq 1$ for all $\ffi\in B_b(\cR^d)$ with $0\leq\ffi\leq1$), all operators $T_t$ map  $B_b(\cR^d)$ into the space of continuous bounded functions  $C_b(\cR^d)$ and $(T_t)_{t\geq 0}$ is a strongly continuous semigroup on $C_\infty(\cR^d)$ (cf. \cite{MR3156646}, \cite{MR1873235}).} if and only if all the measures $\eta_t$ admit densities of class $L^1(\cR^d)$ with respect to the Lebesgue measure (cf. Examle 4.8.21 of \cite{MR1873235}).  
One may consider a strong Feller semigroup $(\bar {S}^\eta_t)_{t\geq0}$ as a semigroup on the space $C_b(\cR^d)$ of all bounded continuous  functions and define its $C_b$-generator $(\bar {L}^\eta,\Dom(\bar {L}^\eta))$ for each $x\in \cR^d$ by 
\begin{align*}
 &\bar {L}^\eta\varphi(x):=\liml_{t\searrow0}\frac{\bar {S}^\eta_t\varphi(x)-\varphi(x)}{t} \quad\text{with the domain }\quad \D(\bar {L}^\eta):=\\
 &
  =\bigg\{ \varphi\in C_b(\cR^d) \,\bigg| \,\liml_{t\searrow0}\frac{\bar {S}^\eta_t\varphi(x)-\varphi(x)}{t}\, \,\text{\rm exists uniformly on compact subsets of } \cR^d \bigg\}.
 \end{align*}
The operator $(\bar {L}^\eta,\Dom(\bar {L}^\eta))$ in $C_b(\cR^d)$  is an extension of  the generator $(L^\eta,\Dom(L^\eta))$ of the semigroup $({S}^\eta_t)_{t\geq0}$ on $C_\infty(\cR^d)$  and in particular $C^2_b(\cR^d)\subset \Dom(\bar {L}^\eta)$ (cf.  Example 4.8.26 of \cite{MR1873235}).

\subsection{The Chernoff Theorem and Feynman formulae}\label{subsec:Chernoff}

Consider an  evolution equation $\frac{\partial f}{\partial t}=Lf$. If $L$ is the generator of a strongly continuous semigroup  $(T_t)_{t\geq 0}$ on a Banach space $(X,\|\cdot\|_X)$, then the (mild) solution of the Cauchy problem for this equation with the initial value $f(0)=f_0\in X$ is given by $f(t)=T_tf_0$ for all $f_0\in X$. Therefore, to solve the evolution equation $\frac{\partial f}{\partial t}=Lf$ means to construct a semigroup $(T_t)_{t\geq 0}$ with the given generator $L$. If  the desired semigroup is not known explicitly it can be  approximated. One of the tools to approximate  semigroups is based on the Chernoff theorem \cite{MR0231238}. 
\begin{theorem}[Chernoff]\label{thm:Chernoff}
Let $X$ be a Banach space, $F:[0,\infty)\to{\mathcal{L}}(X)$ be a (strongly) continuous mapping such that $F(0) =\id$  and $\|F(t)\|\le e^{at}$ for some  $ a\in [0, \infty)$ and all $t \geq  0$. Let  $D$ be a linear subspace of  $\D(F'(0))$ such that the restriction of the operator  $F'(0)$ to this subspace is closable. Let $(L, \D(L))$ be this closure. If  $(L, \D(L))$ is the generator of a strongly continuous semigroup  $(T_t)_{t \geq  0}$, then for any $t_0 >0$ and any $\ffi\in X$ the sequence
 $(F(t/n))^n \ffi)_{n \in {\mathds N}}$ converges to $T_t\ffi$ as $n\to\infty$   uniformly with respect to $t\in[0,t_0],$ i.e.
 \begin{equation}\label{FF}
 T_t = \lim_{n\to\infty}\left[F(t/n)\right]^n
 \end{equation}
in the sence of the strong operator convergence locally uniformly with respect to  $t>0$.
\end{theorem}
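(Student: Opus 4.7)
The plan is to reduce the Chernoff approximation to a convergence problem for uniformly continuous semigroups generated by bounded operators. Fix $t\in(0,t_0]$ and set $V_n:=F(t/n)$ and $A_n:=\tfrac{n}{t}(V_n-\id)\in\mathcal{L}(X)$, so that $V_n=\id+\tfrac{t}{n}A_n$. The hypothesis $\|F(\tau)\|\leq e^{a\tau}$ gives $\|V_n\|\leq e^{at/n}$, and hence the $n$-independent bound $\|V_n^k\|\leq e^{at}$ for $0\leq k\leq n$.

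First I would identify the limit of the bounded semigroups $(e^{sA_n})_{s\geq 0}$. Since $A_n\varphi\to L\varphi$ for every $\varphi\in D$ (exactly the hypothesis that $F'(0)|_D$ closes to $L$), and since the factorisation $e^{sA_n}=e^{-sn/t}e^{(sn/t)V_n}$ yields
$$\|e^{sA_n}\|\leq\exp\bigl(\tfrac{sn}{t}(e^{at/n}-1)\bigr),$$
which is bounded uniformly in $n$ and $s\in[0,t_0]$ (the exponent converges to $as$), the Trotter-Kato approximation theorem applies: $L$ generates a $C_0$-semigroup on $X$, $D$ is a core for $L$, and $A_n\to L$ strongly on $D$, so $e^{sA_n}\varphi\to T_s\varphi$ for every $\varphi\in X$, uniformly in $s\in[0,t_0]$. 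Specialising to $s=t$ gives $e^{n(V_n-\id)}\varphi\to T_t\varphi$.

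The second, and main technical step, is the Chernoff square-root estimate
$$\bigl\|V_n^n\varphi-e^{n(V_n-\id)}\varphi\bigr\|\leq C\sqrt{n}\,\bigl\|(V_n-\id)\varphi\bigr\|,$$
with $C$ depending only on $a,t_0$. Its proof exploits that $V_n$ commutes with every power of itself: one writes
$$V_n^n-e^{n(V_n-\id)}=e^{-n}\sum_{k=0}^\infty\frac{n^k}{k!}\bigl(V_n^n-V_n^k\bigr),$$
bounds each difference $\|(V_n^n-V_n^k)\varphi\|$ via telescoping by $|n-k|$ times $\|(V_n-\id)\varphi\|$ (up to a factor $\leq e^{at}$ for $k\leq n$, with a subexponential factor for $k>n$ absorbed by the Poisson tail), and finally applies Cauchy-Schwarz to the Poisson weights $e^{-n}n^k/k!$ whose mean absolute deviation is of order $\sqrt n$. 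For $\varphi\in D$ one has $\|(V_n-\id)\varphi\|=(t/n)\|A_n\varphi\|=O(1/n)$, so the right-hand side tends to $0$ like $1/\sqrt n$. Controlling this square-root loss is the main obstacle: $V_n-\id$ is not small in operator norm, only on the dense subspace $D$, and the Poisson cancellation is essential.

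Combining the two steps yields $V_n^n\varphi=[F(t/n)]^n\varphi\to T_t\varphi$ for every $\varphi\in D$. Since $D$ is a core for the densely defined $L$, it is dense in $X$; together with the uniform bound $\|[F(t/n)]^n\|\leq e^{at_0}$, a standard $3\eps$-argument extends the convergence to all $\varphi\in X$. Uniformity in $t\in[0,t_0]$ follows from the uniform character of the two individual estimates together with the norm-continuity of $s\mapsto T_s\varphi$ on compacts.
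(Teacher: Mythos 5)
The paper does not prove this theorem: it is quoted verbatim as a known result with a citation to Chernoff's original note \cite{MR0231238}, so there is no internal proof to compare against. Your argument --- passing to the bounded operators $A_n=\tfrac{n}{t}\bigl(F(t/n)-\id\bigr)$, invoking the Trotter--Kato approximation theorem on the core $D$ to get $e^{tA_n}\varphi\to T_t\varphi$, and then closing the gap between $[F(t/n)]^n$ and $e^{n(F(t/n)-\id)}$ via the Poisson-weighted $\sqrt{n}$-estimate before a density/$3\varepsilon$ extension --- is precisely the classical proof (essentially Chernoff's own, as reproduced in standard semigroup texts), and I find no gap in it beyond the admittedly compressed justification of uniformity in $t\in[0,t_0]$, which does follow from the uniformity of your two estimates.
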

Here the derivative at the origin of a function $F:[0,\eps) \to \mathcal{L}(X)$, $\eps > 0$, is a linear mapping $F'(0): \D(F'(0)) \to X$ such that
\begin{equation*}
F'(0)\ffi := \lim_{t \searrow 0}\frac{F(t)\ffi -F(0)\ffi}{t},
\end{equation*}
where $\D(F'(0))$ is the vector space of all elements $\ffi \in X$ for which the above limit exists. A family of operators $(F(t))_{t \geq  0}$ suitable for the formula \eqref{FF}, i.e.   satisfying  all the assertions of the Chernoff  theorem with respect to the semigroup $(T_t)_{t \geq  0}$, is called \emph{Chernoff equivalent} to this semigroup. The equality \eqref{FF} is called \emph{Chernoff approximation of the semigroup $(T_t)_{t \geq  0}$ by the family $(F(t))_{t \geq  0}$}.
 In many cases the operators $F(t)$ are integral operators and, hence, we have a limit of iterated integrals on the right hand side of the equality \eqref{FF}. In this setting it is called  \emph{Feynman formula}.
\begin{definition}
A Feynman formula is a representation of a solution of an initial (or initial-boundary) value problem for an evolution equation (or, equivalently, a representation of the semigroup solving the problem) by a limit of $n$-fold iterated integrals of some functions as $n\to\infty$.
\end{definition}
Richard Feynman was the first who considered representations of solutions of evolution equations by limits of iterated integrals  (\cite{MR0026940}, \cite{MR0044379}).  He has, namely,  introduced  a functional (path) integral  for solving Schr\"{o}dinger equation. And this integral was defined exactly as a  limit of  iterated finite dimensional integrals. Representations of  the solution of an initial (or initial-boundary) value problem for an evolution equation (or, equivalently, a representation of the semigroup resolving the problem) by  functional (path) integrals are   called nowadays \emph{Feynman--Kac formulae}.  It is a usual situation that  limits in  Feynman formulae coincide with (or in some cases define) certain functional integrals with respect to probability measures or Feynman type pseudomeasures on a set  of paths of a physical system.  Hence, the iterated integrals in   Feynman formulae for some problem give approximations to  functional integrals in the Feynman-Kac formulae representing the solution of the same problem. These approximations in many cases contain only elementary functions as integrands and, therefore, can be used for direct calculations and simulations.

In the sequel we need also the following results  of  papers \cite{Yana_Technomag14} and \cite{MR2999096}:
\begin{theorem}[Theo. 5.1 of \cite{MR2999096}]\label{TH5.1}
Let $X$ be a Banach space with a norm $\|\cdot\|_X$. Let
$(T_k(t))_{t\geq 0}$, $k=1,\ldots,m,$   be  strongly continuous semigroups on $X$ with
generators $(L_k,\D(L_k))$ respectively.
Assume that $L=L_1+\cdots +L_m$ with   domain $\D(L)=\cap_{k=1}^m \D(L_k)$ is closable and that the closure is the generator of a strongly continuous
semigroup $(T(t))_{t\geq 0}$  on $X$.
Let  $(F_k(t))_{t\geq 0}$, $k=1,\ldots ,m,$ be families of operators in $X$  which are
Chernoff equivalent to the semigroups $(T_k(t))_{t\geq 0}$ respectively, i.e. for each $k\in\{1,\ldots ,m \}$ we have
$F_k(0)=\id$,  $\|F_k(t)\|\le e^{a_kt}$ for some $a_k>0$ and there is a set
$D_k\subset \D(L_k)$, which is a core for $L_k$, such that  $\lim_{t\to0}\big\|
\frac{F_k(t)\ffi-\ffi}{t}-L_k\ffi\big\|_X=0$ for each $\ffi\in D_k$.  Assume that there exists  a set $D\subset \cap_{k=1}^m D_k$ which is a core for $L$.
Then the family  $(F(t))_{t\geq 0}$, where $F(t)=F_1(t)\circ\cdots \circ F_m(t)$
is Chernoff equivalent to the semigroup $(T(t))_{t\geq 0}$ and hence the Chernoff approximation 
$$
T_t=\lim_{n\to\infty} \big[F(t/n)   \big]^n
$$
is valid in the sence of the strong operator convergence  locally uniformly w.r.t.  $t\geq  0$.
\end{theorem}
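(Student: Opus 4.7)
The plan is to verify all hypotheses of the Chernoff Theorem for the composed family $F(t) := F_1(t)\circ\cdots\circ F_m(t)$ and then invoke that theorem against the semigroup $(T(t))_{t\geq 0}$. Since $F_k(0)=\id$ for each $k$, clearly $F(0)=\id$. The operator-norm bound is also immediate: $\|F(t)\|\leq\prod_{k=1}^m\|F_k(t)\|\leq e^{at}$ with $a:=a_1+\cdots+a_m$. Strong continuity of $F$ on $[0,\infty)$ follows by a standard induction on $m$, using that each $F_k$ is strongly continuous and uniformly bounded on compact intervals. So the first three conditions of the Chernoff Theorem pose no real difficulty.

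The essential step is to compute the right-sided derivative of $F$ at $t=0$ on the set $D$ and show it equals $L\varphi=L_1\varphi+\cdots+L_m\varphi$ there. I would use the telescoping identity
\begin{equation*}
F(t)-\id \;=\; \sum_{k=1}^m F_1(t)\cdots F_{k-1}(t)\bigl(F_k(t)-\id\bigr),
\end{equation*}
which, after dividing by $t$ and fixing $\varphi\in D\subset \bigcap_k D_k$, gives
\begin{equation*}
\frac{F(t)\varphi-\varphi}{t}\;=\;\sum_{k=1}^m F_1(t)\cdots F_{k-1}(t)\,\psi_k(t),\qquad \psi_k(t):=\frac{F_k(t)\varphi-\varphi}{t}.
\end{equation*}
By hypothesis $\psi_k(t)\to L_k\varphi$ in $X$ as $t\to 0$. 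To pass the limit through the prefactor, I would split
\begin{equation*}
F_1(t)\cdots F_{k-1}(t)\psi_k(t)-L_k\varphi
= F_1(t)\cdots F_{k-1}(t)\bigl(\psi_k(t)-L_k\varphi\bigr)
+\bigl(F_1(t)\cdots F_{k-1}(t)-\id\bigr)L_k\varphi.
\end{equation*}
The first term is controlled by the uniform bound $e^{(a_1+\cdots+a_{k-1})t}$ times $\|\psi_k(t)-L_k\varphi\|_X$, which tends to $0$; the second term tends to $0$ by strong continuity of each $F_j$ at $0$. Summing over $k$ yields $D\subset \D(F'(0))$ with $F'(0)\varphi=L\varphi$ for every $\varphi\in D$.

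Finally, since $D$ is a core for $L$ by assumption, the operator $L|_D=F'(0)|_D$ is closable and its closure is exactly $(L,\D(L))$, which by hypothesis generates the strongly continuous semigroup $(T(t))_{t\geq 0}$. All conditions of the Chernoff Theorem are thus met, and we conclude
\begin{equation*}
T(t)\;=\;\lim_{n\to\infty}\bigl[F(t/n)\bigr]^n
\end{equation*}
in the strong operator topology, locally uniformly in $t\geq 0$. The main subtlety of the argument is the second one: the prefactors $F_1(t)\cdots F_{k-1}(t)$ converge to $\id$ only strongly, not in norm, and the quotient $\psi_k(t)$ itself is only a strong (not norm) convergent family, so one must combine the uniform operator-norm bound $e^{at}$ on the prefactors with the strong convergence of the correction on elements $L_k\varphi\in X$—precisely the split displayed above. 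Everything else (boundedness, $F(0)=\id$, closability from cores) is routine.
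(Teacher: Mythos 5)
Your proposal is correct: the telescoping decomposition $F(t)-\id=\sum_{k=1}^m F_1(t)\cdots F_{k-1}(t)\bigl(F_k(t)-\id\bigr)$, the splitting that combines the uniform norm bound on the prefactors with the strong convergence $\psi_k(t)\to L_k\varphi$, and the final appeal to the core hypothesis together verify all conditions of the Chernoff Theorem. The paper itself does not reprove this statement (it imports it as Theorem 5.1 of the cited reference), and your argument is exactly the standard one used there, so there is nothing to flag.
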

\begin{theorem}[Theo. 4 of \cite{Yana_Technomag14}]\label{thm:Technomag mult pert}
Let $Q$ be a metric space. Let $X=C_b(Q)$ or  $X=C_\infty(Q)$ with supremum norm $\|\cdot\|_\infty$. Let $(T(t))_{t\geq 0}$ be a strongly continuous semigroup on $X$ with generator $(L,\Dom(L))$. Let $A(\cdot)$ be a bounded strictly positive continuous function on $Q$. Let $(\widetilde{T}(t))_{t\geq 0}$ be a strongly continuous semigroup on $X$ with generator $(\widetilde{L},\Dom(L))$, where $\widetilde{L}\ffi(q):=A(q)L\ffi(q)$ for all $\ffi\in X$ and all $q\in Q$.
Let  $(F(t))_{t\geq 0}$  be a family of operators in $X$  which is Chernoff equivalent to the semigroup $(T(t))_{t\geq 0}$. Then the family $(\widetilde{F}(t))_{t\geq 0}$, where $\widetilde{F}\ffi(q):=(F(A(q)t)\ffi)(q)$ for all $\ffi\in X$ and all $q\in Q$, is Chernoff equivalent to the semigroup $(\widetilde{T}(t))_{t\geq 0}$ and hence the Chernoff approximation 
$$
\widetilde{T}_t=\lim_{n\to\infty} \big[\widetilde{F}(t/n)   \big]^n
$$
is valid in the sence of the strong operator convergence  locally uniformly w. r.t.  $t\geq  0$.
\end{theorem}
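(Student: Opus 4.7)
\medskip
\noindent\textbf{Proof proposal.} The plan is to verify the hypotheses of the Chernoff Theorem (Theorem \ref{thm:Chernoff}) for the family $(\widetilde F(t))_{t\geq 0}$ against the semigroup $(\widetilde T(t))_{t\geq 0}$. Set $M:=\sup_{q\in Q}A(q)<\infty$. The identity $\widetilde F(0)=\id$ is immediate. To see that $\widetilde F(t)$ maps $X$ into $X$, I would first establish joint continuity of $(s,q)\mapsto F(s)\ffi(q)$ on $[0,Mt]\times Q$: splitting $F(s_n)\ffi(q_n)-F(s)\ffi(q)$ into $[F(s_n)\ffi(q_n)-F(s)\ffi(q_n)]+[F(s)\ffi(q_n)-F(s)\ffi(q)]$, the first term is controlled by strong continuity of $F(\cdot)\ffi$ in the sup norm and the second by continuity of $F(s)\ffi\in X$ in $q$. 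Combined with continuity of $A$, this gives $q\mapsto F(A(q)t)\ffi(q)$ continuous. In the $C_\infty(Q)$ case, the set $\{F(s)\ffi:s\in[0,Mt]\}$ is a continuous image of a compact interval and therefore compact in $C_\infty(Q)$, so its elements vanish at infinity uniformly in $s$, which forces $\widetilde F(t)\ffi\in C_\infty(Q)$. Strong continuity of $t\mapsto\widetilde F(t)$ follows from uniform continuity of $s\mapsto F(s)\ffi$ on compact subintervals together with $|A(q)t-A(q)t_0|\leq M|t-t_0|$. Finally, $|\widetilde F(t)\ffi(q)|\leq\|F(A(q)t)\|\,\|\ffi\|_\infty\leq e^{aA(q)t}\|\ffi\|_\infty\leq e^{aMt}\|\ffi\|_\infty$ yields the required exponential norm bound $\|\widetilde F(t)\|\leq e^{aMt}$.

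The key step is identifying the derivative at the origin. Let $D\subset\Dom(L)$ denote the core for $L$ furnished by the Chernoff equivalence of $F$ to $T$, so that $\bigl\|\tfrac{F(s)\ffi-\ffi}{s}-L\ffi\bigr\|_\infty\to 0$ as $s\to 0^+$ for every $\ffi\in D$. For such $\ffi$ the elementary identity
\begin{equation*}
\frac{\widetilde F(t)\ffi(q)-\ffi(q)}{t}-\widetilde L\ffi(q)=A(q)\left[\frac{F(A(q)t)\ffi(q)-\ffi(q)}{A(q)t}-L\ffi(q)\right]
\end{equation*}
holds pointwise in $q$, and taking the supremum yields
\begin{equation*}
\left\|\frac{\widetilde F(t)\ffi-\ffi}{t}-\widetilde L\ffi\right\|_\infty\leq M\sup_{s\in(0,Mt]}\left\|\frac{F(s)\ffi-\ffi}{s}-L\ffi\right\|_\infty\xrightarrow[t\to 0^+]{}0.
\end{equation*}
It remains to check that $D$ is a core for $\widetilde L$: given $\ffi\in\Dom(\widetilde L)=\Dom(L)$, choose $\ffi_n\in D$ with $\ffi_n\to\ffi$ and $L\ffi_n\to L\ffi$ in $X$; since multiplication by the bounded function $A$ is a bounded operator on $X$, $\widetilde L\ffi_n=A\cdot L\ffi_n\to A\cdot L\ffi=\widetilde L\ffi$, as required. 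Applying Theorem \ref{thm:Chernoff} with generator $\widetilde L$ and core $D$ concludes the proof.

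The principal obstacle is to upgrade the evident pointwise-in-$q$ convergence $\tfrac{\widetilde F(t)\ffi(q)-\ffi(q)}{t}\to\widetilde L\ffi(q)$ to convergence in the sup norm on $Q$. This is precisely where the uniform boundedness $A\leq M$ intervenes: it confines all values $A(q)t$ to the shrinking interval $(0,Mt]$, which allows the sup over $q\in Q$ to be dominated by a sup over $s$ in that shrinking interval, and the $X$-norm convergence supplied by the Chernoff equivalence of $F$ to $T$ then closes the argument.
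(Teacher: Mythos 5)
The paper does not prove this statement: it is quoted verbatim as Theorem~4 of the cited reference \cite{Yana_Technomag14}, so there is no in-paper proof to compare against. Judged on its own, your argument is correct and is the natural one. The two load-bearing steps are both handled properly: (a) the pointwise identity
\begin{equation*}
\frac{\widetilde F(t)\ffi(q)-\ffi(q)}{t}-\widetilde L\ffi(q)=A(q)\left[\frac{F(A(q)t)\ffi(q)-\ffi(q)}{A(q)t}-L\ffi(q)\right],
\end{equation*}
valid because $A(q)>0$ for every $q$, followed by domination of the supremum over $q$ by $M\sup_{s\in(0,Mt]}\bigl\|\tfrac{F(s)\ffi-\ffi}{s}-L\ffi\bigr\|_\infty$ --- note that this correctly requires only $A\leq M$ and \emph{not} $\inf A>0$, since arbitrarily small values of $s=A(q)t$ are exactly what the hypothesis $\lim_{s\to0^+}\|\tfrac{F(s)\ffi-\ffi}{s}-L\ffi\|_\infty=0$ controls; and (b) the observation that $D$ remains a core for $\widetilde L$ because multiplication by the bounded function $A$ is continuous from the graph norm of $L$ to that of $\widetilde L$, which is what the Chernoff Theorem needs (the closure of $\widetilde F'(0)|_D$ is then the closed operator $(\widetilde L,\Dom(L))$ assumed to generate $(\widetilde T(t))_{t\geq0}$). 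The auxiliary verifications (joint continuity of $(s,q)\mapsto F(s)\ffi(q)$ to see $\widetilde F(t)\ffi\in X$, the compactness argument for vanishing at infinity in the $C_\infty(Q)$ case, the bound $\|\widetilde F(t)\|\leq e^{aMt}$, and strong continuity via uniform continuity of $s\mapsto F(s)\ffi$ on compacts) are all sound. I see no gap.
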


\section{Approximation of subordinate   semigroups}

\subsection{Case 1: transitional probabilities of subordinators are known}\label{subsec:knownsubordinators}

In this 
subsection we consider the semigroup $(T^f_t)_{t\geq 0}$ subordinate   to a given semigroup $(T_t)_{t\geq 0}$ with respect to a given convolution semigroup $(\eta_t)_{t\geq 0}$ associated to a Bernstein function $f$ defined by a triplet $(\sigma,\lambda,\mu)$.  We assume that the corresponding convolution semigroup $(\eta^0_t)_{t\geq 0}$ associated to the Bernstein function $f_0$ defined by the triplet $(0,0,\mu)$ is known explicitly and corresponds to a strong Feller semigroup $(\bar {S}^{\eta^0}_t)_{t\geq0}$.  This is the case of inverse Gaussian (including $1/2$-stable) subordinator,   Gamma subordinator and some others (see, e.g., \cite{MR3231629} for examples). We are interested in approximation of the subordinate   semigroup $(T^f_t)_{t\geq 0}$ when the semigroup $(T_t)_{t\geq 0}$ is not known explicitly but is  approximated by a given  family $(F(t))_{t\geq 0}$ which is Chernoff equivalent to $(T_t)_{t\geq 0}$.
\begin{theorem}\label{thm:ChFam_knownSub-r}
Let $(T_t)_{t\geq 0}$  be a strongly continuous contraction semigroup on a Banach space $(X,\|\cdot\|_X)$ with the generator $(L,\D(L))$. Let  $(F(t))_{t\geq 0}$ be a family of contractions on $(X,\|\cdot\|_X)$ which is Chernoff equivalent to $(T_t)_{t\geq 0}$, i.e. $F(0)=\id$,  $\|F(t)\|\le 1$ for all  $t\geq 0$ and there is a set $D\subset \D(L)$, which is a core for $L$, such that  $\lim_{t\to0}\big\|\frac{F(t)\ffi-\ffi}{t}-L\ffi\big\|_X=0$ for each $\ffi\in D$. Let $f$ be a Bernstein function  given by a triplet $(\sigma,\lambda,\mu)$ through  the representation \eqref{eq:BernsteinFunc}  with associated convolution semigroup $(\eta_t)_{t\geq 0}$ supported by $[0,\infty)$.
Let $(\eta^0_t)_{t\geq 0}$ be the (supported by $[0,\infty)$) convolution semigroup associated to the Bernstein function $f_0$ defined by the triplet $(0,0,\mu)$. Assume that the corresponding operator semigroup $(\bar {S}^{\eta^0}_t)_{t\geq0}$ is strong Feller.
 Let $m:(0,\infty)\to\mathds{N}_0$ be a monotone function with $m(t)\to\infty$ as $t\to0$\footnote{ One can take, e.g., $m(t):=\left[ 1/t  \right]=$  the largest integer $n\leq 1/t$. }. Let $(T^f_t)_{t\geq 0}$ be the semigroup subordinate   to $(T_t)_{t\geq 0}$ with respect to  $(\eta_t)_{t\geq 0}$ and $L^f$ be  its generator. Consider a family   $(\mathcal{F}(t))_{t\geq 0}$  of operators on $(X,\|\cdot\|_X)$ defined by $\mathcal{F}(0)=\id$ and
\begin{equation}\label{eq:mathcal(F)}
\mathcal{F}(t)\ffi=  e^{-\sigma t}\circ   F(\lambda t)\circ \mathcal{F}_0(t)\ffi,\quad t>0,\, \ffi\in X,
\end{equation}
with $\mathcal{F}_0(0)=\id$ and\footnote{For any bounded operator $B$ its zero degree $B^0$ is considered to be the identity operator.}
\begin{equation}\label{eq:F_0}
\mathcal{F}_0(t)\ffi=   \intl_{0+}^\infty \left[F( s/m(t))\right]^{m(t)}\ffi\,\eta^0_t(ds),\quad t>0,\, \ffi\in X.
\end{equation}
The family $(\mathcal{F}(t))_{t\geq 0}$ is Chernoff equivalent to the semigroup $(T^f_t)_{t\geq 0}$ and, hence
$$
T^f_t \ffi=\lim_{n\to\infty} \big[\mathcal{F}(t/n)   \big]^n \ffi
$$
for all $\ffi\in X$ locally uniformly with respect to $t\geq  0$.
\end{theorem}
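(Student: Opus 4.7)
The plan is to apply the Chernoff Theorem to $(\mathcal{F}(t))_{t\geq 0}$, using the given subspace $D\subset\D(L)$ as the core. Since $D$ is a core for $L$, it is automatically a core for the subordinate generator $L^f$ by Prop.~32.5 of \cite{MR1739520}. The properties $\mathcal{F}(0)=\id$ and $\|\mathcal{F}(t)\|\leq 1$ are immediate from the definition, $\|F(\lambda t)\|, \|[F(s/m(t))]^{m(t)}\|\leq 1$, and $\eta^0_t((0,\infty))\leq 1$ (in fact an equality under the strong Feller hypothesis, which forces $\eta^0_t$ to have no atom at $0$); strong continuity at $t=0$ is verified analogously using $\eta^0_t\to\delta_0$ weakly.

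The key structural observation is that the Laplace-transform identity $e^{-tf(\xi)}=e^{-\sigma t}\cdot e^{-\lambda t\xi}\cdot e^{-tf_0(\xi)}$ gives $\eta_t=e^{-\sigma t}\,\delta_{\lambda t}*\eta^0_t$ and hence
$$T^f_t = e^{-\sigma t}\,T_{\lambda t}\,T^{f_0}_t,$$
where $T^{f_0}_t$ is the semigroup subordinate to $T_t$ via $\eta^0_t$, with generator $L^{f_0}\ffi=\int_{0+}^\infty(T_s\ffi-\ffi)\mu(ds)$ on $\D(L)$; moreover $T_{\lambda t}$ and $T^{f_0}_t$ commute. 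The factorization $\mathcal{F}(t)=e^{-\sigma t}\circ F(\lambda t)\circ\mathcal{F}_0(t)$ mirrors this, so by Theorem~\ref{TH5.1} it suffices to show each factor is Chernoff-equivalent to the corresponding factor of $T^f_t$. The scalar factor is trivial, and $F(\lambda t)$ is Chernoff-equivalent to $T_{\lambda t}$ by the chain rule $\tfrac{d}{dt}F(\lambda t)|_{t=0}\ffi=\lambda L\ffi$ on $D$, reducing the problem to showing that $\mathcal{F}_0(t)$ is Chernoff-equivalent to $T^{f_0}_t$.

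The core computation, and the main obstacle, is to verify $\mathcal{F}_0'(0)\ffi=L^{f_0}\ffi$ for $\ffi\in D$. Write
$$\frac{\mathcal{F}_0(t)\ffi-\ffi}{t}=\frac{T^{f_0}_t\ffi-\ffi}{t}+\frac{1}{t}\int_{0+}^\infty\bigl([F(s/m(t))]^{m(t)}-T_s\bigr)\ffi\,\eta^0_t(ds).$$
The first summand tends to $L^{f_0}\ffi$ by the definition of the generator of $T^{f_0}$. The error term is handled by splitting the integration into $(0,\eps]$, $(\eps,R]$, $(R,\infty)$ and using a separate estimate on each piece.

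On $(R,\infty)$, bound by $2\|\ffi\|\cdot\eta^0_t((R,\infty))/t$, which by the vague convergence $\eta^0_t/t\to\mu$ on sets bounded away from $0$ tends to $2\|\ffi\|\mu((R,\infty))$ and is made arbitrarily small by taking $R$ large. On $(\eps,R]$, use the locally uniform convergence $\sup_{s\in[\eps,R]}\|[F(s/m(t))]^{m(t)}\ffi-T_s\ffi\|\to 0$ (from Theorem~\ref{thm:Chernoff} applied to $(F(t))$) combined with the bounded quantity $\eta^0_t((\eps,R])/t\to\mu((\eps,R])$, so the product vanishes. On $(0,\eps]$, use that for $\ffi\in D$ one has $\|F(h)\ffi-\ffi\|\leq(1+o(1))h\|L\ffi\|$ as $h\to 0$; applied with $h=s/m(t)$ (uniformly small on $(0,\eps]$ once $t$ is small, since $m(t)\to\infty$) and iterated $m(t)$ times using $\|F\|\leq 1$, this yields $\|[F(s/m(t))]^{m(t)}\ffi-\ffi\|\leq(1+o(1))s\|L\ffi\|$; combined with $\|T_s\ffi-\ffi\|\leq s\|L\ffi\|$ the integrand is $O(s)$, and the Lévy moment condition $\int_0^1 s\,\mu(ds)<\infty$ bounds $\int_0^\eps s\,\eta^0_t(ds)/t$ by $\int_0^{2\eps} s\,\mu(ds)+o(1)$, small for small $\eps$. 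Combining the three pieces, the error vanishes as $t\to 0$, completing the verification of the Chernoff hypothesis and hence the theorem.
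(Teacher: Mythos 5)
Your proposal follows essentially the same route as the paper: reduce via the factorization $\mathcal{F}(t)=e^{-\sigma t}\circ F(\lambda t)\circ\mathcal{F}_0(t)$ and Theorem~\ref{TH5.1} to showing that $(\mathcal{F}_0(t))_{t\geq0}$ is Chernoff equivalent to $(T^{f_0}_t)_{t\geq0}$, and verify the derivative at zero by splitting the integral against $\eta^0_t$ into a neighbourhood of $0$, a compact middle range, and a tail. Your treatment of the three pieces matches the paper's Lemmas~\ref{le:estimate of Psi}--\ref{le:5} in substance; the only cosmetic difference is that you invoke the vague convergence $t^{-1}\eta^0_t\to\mu$ away from the origin directly, whereas the paper derives exactly this (for smooth cutoffs $\chi_k\in C^2_b$ vanishing at $0$) from the strong Feller hypothesis via the $C_b$-generator $\bar L^{\eta^0}$ --- which is in fact why that hypothesis appears in the statement. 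Your near-zero bookkeeping (integrand $O(s)$, then shrink $\eps$ using $\int_{0+}^1 s\,\mu(ds)<\infty$) is an acceptable variant of the paper's uniform estimate $\Psi_t(s)/s<\eps$.

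One concrete omission: the Chernoff Theorem requires $t\mapsto\mathcal{F}_0(t)$ to be strongly continuous on all of $[0,\infty)$, and you only verify continuity at $t=0$. Continuity at an interior point $t_0>0$ is not immediate here, because $m(t)$ is integer-valued and jumps, so one must compare $\left[F(s/m(t))\right]^{m(t)}\ffi$ with $\left[F(s/m(t_0))\right]^{m(t_0)}\ffi$ uniformly on compact $s$-ranges and also control the variation of $\eta^0_t$ in $t$; the paper devotes the second half of Lemma~\ref{le:2} to precisely this (a telescoping estimate over at most $M=\max_{[t_0-\eps,t_0+\eps]}m(t)$ factors plus a tail cutoff in $s$). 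The step is provable, so this is a gap in completeness rather than a flaw in the approach, but it should not be waved through as analogous to the $t=0$ case.
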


\begin{proof}
Let us prove that the family $(\mathcal{F}_0(t))_{t\geq 0}$ is Chernoff equivalent to the semigroup $(T^{f_0}_t)_{t\geq 0}$ subordinate   to $(T_t)_{t\geq 0}$ with respect to  $(\eta^0_t)_{t\geq 0}$. The generator $L^{f_0}$ of this semigroup for each $\ffi\in \Dom(L)$ is given by
\begin{equation}\label{eq:L^f0}
L^{f_0}\ffi=\intl_{0+}^\infty (T_s\ffi-\ffi)\mu(ds)
\end{equation}
and $D\subset \Dom(L)$ is a core for $L^{f_0}$.
 The statement of the Theorem   then follows immediately from Theorem \ref{TH5.1} since $F(\lambda t)$ is Chernoff equivalent to  $T_{\lambda t}\equiv e^{(t\lambda)L}\equiv e^{t(\lambda L)}$ for each $\lambda>0$. The proof, that  the family $(\mathcal{F}_0(t))_{t\geq 0}$ is Chernoff equivalent to the semigroup $(T^{f_0}_t)_{t\geq 0}$, is the subject of   the following five Lemmas.

\begin{lemma}\label{le:1}
Operators $\mathcal{F}_0(t)$ are  contractions on $X$ for all  $t>0$.
\end{lemma}

\begin{proof}
Taking into account that all operators $F(t)$ are contractions on $X$ and that $\eta^0_t(\cR)= 1$ one has for each $t>0$
\begin{align*}
\|\mathcal{F}(t)\ffi\|_X&=\left\| \,  \intl_{0+}^\infty \left[F( s/m(t))\right]^{m(t)}\ffi\,\eta^0_t(ds)  \right\|_X
\leq\intl_{0+}^\infty\left\|  \left[F( s/m(t))\right]^{m(t)}\ffi\right\|_X\,\eta^0_t(ds)\\
&
\leq \intl_{0+}^\infty 
\left\|  \left[F( s/m(t))\right]\right\|^{m(t)} \left\| \ffi\right\|_X\,\eta^0_t(ds)\leq \left\| \ffi\right\|_X.
\end{align*}
\end{proof}

\begin{lemma}\label{le:2}
The family $(\mathcal{F}_0(t))_{t\geq 0}$ is  strongly continuous.
\end{lemma}
\begin{proof}
Let us check first  that the family $(\mathcal{F}_0(t))_{t\geq 0}$ is strongly continuous at zero. For each $\ffi\in X$ we have
\begin{align*}
\liml_{t\to 0}&\| \mathcal{F}_0(t)\ffi-\ffi  \|_X=\liml_{t\to 0} \left\| \, \intl_{0+}^\infty \left[F( s/m(t))\right]^{m(t)}\ffi\,\eta^0_t(ds)-\ffi\right\|_X \\
&
\leq\liml_{t\to 0} \left\| \, \intl_{0+}^\infty \left[F( s/m(t))\right]^{m(t)}\ffi\,\eta^0_t(ds)-T^{f_0}_t\ffi\right\|_X+
\liml_{t\to 0} \left\| \,  T^{f_0}_t\ffi-\ffi\right\|_X\\
&
=\liml_{t\to 0} \left\| \, \intl_{0+}^\infty \left(\left[F( s/m(t))\right]^{m(t)}\ffi- T_s\ffi\right)\,\eta^0_t(ds)\right\|_X\\
&
\leq \liml_{t\to 0} \intl_{0+}^\infty  \left\| \left[F( s/m(t))\right]^{m(t)}\ffi- T_s\ffi\right\|_X\,\eta^0_t(ds)\\
&
\leq \liml_{t\to 0} \intl_{0+}^1  \left\| \left[F( s/m(t))\right]^{m(t)}\ffi- T_s\ffi\right\|_X\,\eta^0_t(ds) +\liml_{t\to 0} 2\|\ffi\|_X \intl_{1}^\infty  \eta^0_t(ds)\\
&
\leq \liml_{t\to 0} \sup\limits_{s\in[0,1]}\left\| \left[F( s/m(t))\right]^{m(t)}\ffi- T_s\ffi\right\|_X\\
&
=0
\end{align*}
since  the convergence of $\left\| \left[F( s/m(t))\right]^{m(t)}\ffi- T_s\ffi\right\|_X$ to zero as $t\to0$ is uniform w.r.t. $s$ on compact intervals due to the Chernoff Theorem and since $\eta^0_t$ weakly converges to the Dirac delta-measure $\delta_0$ as $t\to0$.

Let us now check the strong continuity of the family $(\mathcal{F}_0(t))_{t\geq 0}$ at a point $t_0>0$.
\begin{align*}
\liml_{t\to t_0}& \left\| \,\mathcal{F}_0(t)\ffi-\mathcal{F}_0(t_0)\ffi\right\|_X\\
&
=\liml_{t\to t_0}\left\| \,  \intl_{0+}^\infty \left[F( s/m(t))\right]^{m(t)}\ffi\,\eta^0_t(ds) - \intl_{0+}^\infty\left[F( s/m(t_0))\right]^{m(t_0)}\ffi  \,\eta^0_{t_0}(ds)                  \right\|_X\\
&
\leq \liml_{t\to t_0} \left\| \,\intl_{0+}^\infty \left[F( s/m(t))\right]^{m(t)}\ffi  \,[\eta^0_t-\eta^0_{t_0}](ds)\right\|_X   + \\
&
\quad + \liml_{t\to t_0} \intl_{0+}^\infty\left\| \,\left[F( s/m(t))\right]^{m(t)}\ffi - \left[F( s/m(t_0))\right]^{m(t_0)}\ffi  \right\|_X\,\eta^0_{t_0}(ds)
\end{align*}
Choose $\eps>0$ such that $[t_0-\eps, t_0+\eps]\subset (0,\infty)$ and consider $t\in[t_0-\eps, t_0+\eps]$. Let  $M$ be the maximum of $m(t)$ on the segment $[t_0-\eps, t_0+\eps]$, hence $M<\infty$. Let further  $t>t_0$, i.e. $m(t)\leq m(t_0)$. The opposite situation can be considered similarly. Denote $F:=F( s/m(t))$, $F_0:=F( s/m(t_0))$, $m:=m(t)$ and $m_0:=m(t_0)$. Then
\begin{align*}
&\left\| \,\left[F( s/m(t))\right]^{m(t)}\ffi - \left[F( s/m(t_0))\right]^{m(t_0)}\ffi  \right\|_X=
\left\|   F^m\ffi-F_0^{m_0}\ffi   \right\|_X \\
&
\leq      \left\|   F^m\ffi-F_0^{m}\ffi   \right\|_X + \left\|F^{m}_0\ffi - F^{m_0}_0\ffi    \right\|_X   \\
&
\leq \left\|F^{m-1}\circ F\ffi - F^{m-1}\circ F_0\ffi   \right\|_X + \left\|F^{m-1}\circ F_0\ffi - F^{m-1}_0\circ F_0\ffi    \right\|_X + \left\|F^{m}_0\ffi - F^{m_0}_0\ffi    \right\|_X \\
&
\leq \left\|F\ffi - F_0\ffi   \right\|_X + \left\|F^{m-1}(F_0\ffi) - F^{m-1}_0(F_0\ffi)    \right\|_X+ \left\|F^{m_0-m}_0\ffi - \ffi    \right\|_X\leq \ldots \\ 
&
\leq \sum\limits_{k=0}^m \left\|F(F_0^{k-1}\ffi) - F_0(F_0^{k-1}\ffi)    \right\|_X+ \left\|F^{m_0-m}_0\ffi - \ffi    \right\|_X\\ 
&
\leq \sum\limits_{k=0}^M \left\|F(F_0^{k-1}\ffi) - F_0(F_0^{k-1}\ffi)    \right\|_X+ \left\|F^{m_0-m}_0\ffi - \ffi    \right\|_X
\end{align*}
For any $\delta>0$ choose $R=R(\delta)$ so that $\int\limits_{R}^\infty\eta^0_{t_0}(ds)<\delta$.    Then
\begin{align*}
&\liml_{t\to t_0} \intl_{0+}^\infty\left\| \,\left[F( s/m(t))\right]^{m(t)}\ffi - \left[F( s/m(t_0))\right]^{m(t_0)}\ffi  \right\|_X\,\eta^0_{t_0}(ds)\\
&
\leq \liml_{t\to t_0} \intl_{0+}^R\left\| \,\left[F( s/m(t))\right]^{m(t)}\ffi - \left[F( s/m(t_0))\right]^{m(t_0)}\ffi  \right\|_X\,\eta^0_{t_0}(ds)\\
&
\quad+\liml_{t\to t_0} \intl_{R}^\infty\left\| \,\left[F( s/m(t))\right]^{m(t)}\ffi - \left[F( s/m(t_0))\right]^{m(t_0)}\ffi  \right\|_X\,\eta^0_{t_0}(ds)\\
&
\leq  \liml_{t\to t_0}\sup\limits_{s\in[0,R]}\left\| \,\left[F( s/m(t))\right]^{m(t)}\ffi - \left[F( s/m(t_0))\right]^{m(t_0)}\ffi  \right\|_X+ 2\|\ffi\|_X\intl_{R}^\infty\eta^0_{t_0}(ds)\\
&
\leq \sum\limits_{k=0}^M \liml_{t\to t_0} \sup\limits_{s\in[0,R]}\left\| \,F( s/m(t))\left[F( s/m(t_0))\right]^{k-1}\ffi- F( s/m(t_0))\left[F( s/m(t_0))\right]^{k-1}\ffi   \right\|_X  \\
&
\quad  +\liml_{t\to t_0} \sup\limits_{s\in[0,R]}\left\| \, \left[F( s/m(t_0))\right]^{m(t_0)-m(t)}\ffi -\ffi    \right\|_X + 2\|\ffi\|_X\delta\\
&
\leq 2\|\ffi\|_X\delta
\end{align*}
since the family $(F(t))_{t\geq0}$ is strongly continuous. The above inequalities hold for any $\delta>0$, hence one has
$$
\liml_{t\to t_0} \intl_{0+}^\infty\left\| \,\left[F( s/m(t))\right]^{m(t)}\ffi - \left[F( s/m(t_0))\right]^{m(t_0)}\ffi  \right\|_X\,\eta^0_{t_0}(ds)=0.
$$
Due to the weak convergence of $\eta^0_t $ to $\eta^0_{t_0}$ and due to continuity and boundness of the integrand as a function of $(s,t)$ one has also 
\begin{align*}
\liml_{t\to t_0} \left\| \,\intl_{0+}^\infty \left[F( s/m(t))\right]^{m(t)}\ffi  \,[\eta^0_t-\eta^0_{t_0}](ds)\right\|_X =0.
\end{align*}
This ends the proof. 
 \end{proof}  

\begin{lemma}\label{le:estimate of Psi}
For a fixed $\ffi\in D$ define the function $\Psi_t:[0,\infty)\to [0,\infty)$ by $\Psi_t(s):=\left\| F^{m(t)}(s/m(t))\ffi- T_s \ffi  \right\|_X$. For each $t>0$ and each $s>0$ the following estimate holds:
\begin{align*}\label{eq:Psi-estimate}
&\frac{\Psi_t(s)}{s}\leq \left\| \frac{T_s\ffi - \ffi}{s} - L\ffi   \right\|_X + \left\| \frac{F(s/m(t))\ffi - \ffi}{s/m(t)} - L\ffi   \right\|_X +\\
&
+\left\|  \left(\frac{1}{m(t)}\left[ F^{m(t)-1}(s/m(t))+F^{m(t)-2}(s/m(t))+\ldots+F(s/m(t)) +\id \right]  -\id    \right)L\ffi  \right\|_X. \nonumber
\end{align*} 
\end{lemma}

\begin{proof}
Denote $B:= F^{m(t)-1}(s/m(t))+F^{m(t)-2}(s/m(t))+\ldots+F(s/m(t)) +\id$. Then 
$F^{m(t)}(s/m(t))\ffi - \ffi=B(F(s/m(t)) -\id) \ffi$ and $\| B\|\leq m(t)$. Therefore,  one has
\begin{align*}
&\frac{\Psi_t(s)}{s}=\left\| \frac{F^{m(t)}(s/m(t))\ffi- T_s \ffi}{s}  \right\|_X\leq\\
&
\quad\leq\left\| \frac{F^{m(t)}(s/m(t))\ffi-  \ffi}{s} -L\ffi \right\|_X + \left\| \frac{T_s\ffi - \ffi}{s} - L\ffi   \right\|_X
\end{align*}
and
\begin{align*}
&\left\| \frac{F^{m(t)}(s/m(t))\ffi-  \ffi}{s} -L\ffi \right\|_X=\\
&
=\left\| \frac{(m^{-1}(t)B)(F(s/m(t))\ffi-  \ffi)}{s/m(t)} -(m^{-1}(t)B)L\ffi +(m^{-1}(t)B)L\ffi -L\ffi \right\|_X\leq\\
&
\leq \left\| \frac{F(s/m(t))\ffi-  \ffi}{s} -L\ffi \right\|_X + \left\| (m^{-1}(t)B)L\ffi -L\ffi \right\|_X.
\end{align*}
\end{proof}

\begin{lemma}\label{le:estimate of Psi-2}
Let  $\Psi_t$ be as in Lemma \ref{le:estimate of Psi}. For each $\eps>0$ there exist $t_\eps>0$ and $s_\eps>0$ such that for all $t\in(0,t_\eps]$ and all $s\in(0,s_\eps]$ holds the estimate
$$
\frac{\Psi_t(s)}{s}<\eps.
$$
\end{lemma}

\begin{proof}
Fix $\eps>0$. Choose $s_1>0$ such that $\left\| \frac{T_s\ffi - \ffi}{s} - L\ffi   \right\|_X <\eps/3$ for all $s\in(0,s_1]$. This can be done since $\ffi\in D\subset \Dom(L)$.  Choose then $t_1>0$ such that for all $s\in(0,s_1]$ one has  $ \left\| \frac{F(s/m(t_1))\ffi - \ffi}{s/m(t_1)} - L\ffi   \right\|_X<\eps/3$. This can be done due to assumption $\lim_{t\to0}\big\|\frac{F(t)\ffi-\ffi}{t}-L\ffi\big\|_X=0$ for each $\ffi\in D$.  Since $s/m(t)\leq s_1/m(t_1)$ for all $s\in(0,s_1]$ and $t\in(0,t_1]$, one has also $ \left\| \frac{F(s/m(t))\ffi - \ffi}{s/m(t)} - L\ffi   \right\|_X<\eps/3$ for such $s$ and $t$. Since the semigroup  $(T_t)_{t\geq0}$ is strongly continuous choose   $s_2\in(0,s_1]$ such that  $\| T_\tau L\ffi-L\ffi \|_X<\eps/9$ for all $\tau\in (0,s_2]$. Due to the Chernoff Theorem it is possible to choose $K\in\NN$ such that  for all $k\geq K$ and all $\tau\in [0,s_2/m(t_1)]$ the inequality $\left\|  F^{k-1}(\tau/(k-1))L\ffi - T_\tau L\ffi \right\|_X<\eps/9$  holds.  Choose $t_2\in (0,t_1]$ such that $m(t_2)> K$.
Thus,  the following estimate is true for  $s\in (0,s_2]$ and $t\in(0,t_2]$
\begin{align*}
&\left\| \frac{1}{m(t)}\suml_{k=1}^{m(t)} F^{k-1}(s/m(t)) L\ffi - L\ffi\right\|_X\leq\\
&
\leq \frac{1}{m(t)} \suml_{k=1}^{m(t)} \left\|F^{k-1}\left(\frac{(k-1)s/m(t)}{k-1}\right) L\ffi -  T_{(k-1)s/m(t)} L\ffi\right\|_X +\\
&
\quad\quad\quad+\frac{1}{m(t)}\suml_{k=1}^{m(t)}\left\|  T_{(k-1)s/m(t)} L\ffi-L\ffi\right\|_X\leq\\
&
\leq \frac{1}{m(t)} \suml_{k=K}^{m(t)} \left\|F^{k-1}\left(\frac{(k-1)s/m(t)}{k-1}\right) L\ffi -  T_{(k-1)s/m(t)} L\ffi\right\|_X + \frac{2K\|L\ffi\|_X}{ m(t)} + \eps/9\\
&
\leq 2K\|L\ffi\|_X m^{-1}(t) +2\eps/9.
\end{align*}
Due to our assumptions the function $m:(0,\infty)\to\mathds{N}_0$ is  monotone  with $m(t)\to\infty$ as $t\to0$. Therefore, one can choose  $t_3\in (0,t_2]$ with  $m(t_3)> \frac{18K\|L\ffi\|_X}{\eps}$. Then    due to Lemma \ref{le:estimate of Psi} with $t_\eps:=t_3$  and $s_\eps:=s_2$  holds
$$
\frac{\Psi_t(s)}{s}<\eps.
$$
\end{proof}

\begin{lemma}\label{le:5}
For each  $\ffi\in D$  holds:
$$
\lim_{t\to0}\left\|\frac{\mathcal{F}_0(t)\ffi-\ffi}{t}-L^{f_0}\ffi\right\|_X=0.
$$
\end{lemma}
\begin{proof} With the function $\Psi_t$  defined in Lemma \ref{le:estimate of Psi}, one has

\begin{align*}
&\lim_{t\to0}\left\|\frac{\mathcal{F}_0(t)\ffi-\ffi}{t}-L^{f_0}\ffi\right\|_X
\leq \lim_{t\to0}\left\|\frac{\mathcal{F}_0(t)\ffi-T^{f_0}\ffi}{t}\right\|_X + \lim_{t\to0}\left\|\frac{T^{f_0}\ffi-\ffi}{t}-L^{f_0}\ffi\right\|_X=\\
&
= \lim_{t\to0}\frac1t\left\|\intl_0^\infty(F^{m(t)}(s/m(t))\ffi- T_s \ffi )\eta^0_t(ds)\right\|_X
\leq \lim_{t\to0}\frac1t \intl_0^\infty \Psi_t(s)\eta^0_t(ds).
\end{align*}
Fix an arbitrary $\eps>0$. Take $t_\eps$ and $s_\eps$ as in Lemma \ref{le:estimate of Psi-2}. Let $r_\eps:=\min(s_\eps, 1)$. Take $R_\eps>0$ such that $\int_{R_\eps}^\infty \mu(ds)<\eps$. For $k=1,2,3$ choose functions $\chi_k\in C^2_b(\cR)$ with $0\leq \chi_k\leq1$ such that  $\supp \chi_1\subset (-1, r_\eps)$, $\supp \chi_3\subset (R_\eps,\infty)$, $\supp \chi_2\subset (r_\eps/2, 2R_\eps)$ and $\sum_{k=1}^3\chi_k(s)=1$ for all $s\geq 0$. Then by Lemma \ref{le:estimate of Psi-2}
\begin{align*}
&\lim_{t\to0}\frac1t \intl_0^\infty \Psi_t(s)\eta^0_t(ds)\leq\lim_{t_\eps>t\to0}\frac{\eps}{t} \intl_0^{r_\eps} s\chi_1(s)\eta^0_t(ds)+ \\
&
+\lim_{t\to0}\sup\limits_{s\in[r_\eps/2, 2R_\eps]}\Psi_t(s)\cdot \frac1t \intl_{r_\eps/2}^{2R_\eps}\chi_2(s)\eta^0_t(ds) + \lim_{t\to0}\frac{2\|\ffi\|_X}{t} \intl_{R_\eps}^\infty \chi_3(s)\eta^0_t(ds).
\end{align*}
Due to the Chernoff theorem $\lim_{t\to0}\sup\limits_{s\in[r_\eps/2, 2R_\eps]}\Psi_t(s)=0$ for any fixed $r_\eps$ and $R_\eps$.  Define also $\chi_4$ such that  $\chi_4(s):=s\chi_1(s)$ for all $s\in\cR$. Since the semigroup $(\bar {S}^{\eta^0}_t)_{t\geq 0} $ is strong Feller, $\chi_k\in C^2_b(\cR)\subset \Dom(\bar {L}^{\eta^0})$ and $\chi_k(0)=0$ for $k=2,3,4$, one has  
\begin{align*}
\lim_{t\to0}\frac1t\intl_{0+}^\infty \chi_k(s)\eta_t^0(ds)=\lim_{t\to0}\frac{\bar {S}^{\eta^0}_t\chi_k - \chi_k}{t}(0)=\left(\bar {L}^\eta \chi_k\right)(0)=\intl_{0+}^\infty \chi_k(s)\mu(ds).
\end{align*}
Therefore, $\intl_{0+}^\infty \chi_2(s)\mu(ds)=\intl_{r_\eps/2}^{2R_\eps}\chi_2(s)\mu(ds)\leq \mu[r_\eps/2,2R_\eps]<\infty$ (cf. Lemma 2.16 of \cite{MR3156646}).  And hence
$$
\lim_{t\to0}\sup\limits_{s\in[r_\eps/2, 2R_\eps]}\Psi_t(s)\cdot \frac1t \intl_{r_\eps/2}^{2R_\eps}\chi_2(s)\eta^0_t(ds)=0.
$$
Similarly, 
\begin{align*}
\liml_{t\to0}\frac{2\|\ffi\|_X}{t} \intl_{R_\eps}^\infty \chi_3(s)\eta^0_t(ds)= 2\|\ffi\|_X\intl_{R_\eps}^\infty \chi_3(s)\mu(ds)< 2\eps\|\ffi\|_X.
\end{align*}
And, further,  with $K:=\int_0^{1}s\mu(ds)<\infty$
$$
\lim_{t_\eps>t\to0}\frac{\eps}{t} \intl_0^{r_\eps} s\chi_1(s)\eta^0_t(ds)=\eps\intl_0^{r_\eps}s\chi_1(s)\mu(ds)\leq \eps\intl_0^{1}s\mu(ds)=K\eps.
$$ 
 Thus, it is shown that for each fixed $\eps>0$
 $$
 \lim_{t\to0}\left\|\frac{\mathcal{F}_0(t)\ffi-\ffi}{t}-L^{f_0}\ffi\right\|_X\leq \eps(K+2\|\ffi\|_X).
 $$
 Therefore, the statement of Lemma is true.
\end{proof}

Hence the family $(\mathcal{F}_0(t))_{t\geq0}$ is Chernoff equivalent to the semigroup $(T^{f_0}_t)_{t\geq 0}$ subordinate   to $(T_t)_{t\geq 0}$ with respect to  $(\eta^0_t)_{t\geq 0}$. And Theorem \ref{thm:ChFam_knownSub-r} is proved.

\end{proof} 

\begin{remark}\label{rem:variable b}
Let now  $X=C_b(Q)$ or $X=C_\infty(Q)$, where $Q$  is a  metric space. Let $\sigma$, $\lambda$ be not  constants  but  continuous functions on $Q$ such that $\lambda$ is bounded and strictly positive and $\sigma$ is bounded from below. Assume that the operator  $L^f$ defined as in \eqref{eq:L^f} (but with variable $\sigma$ and $\lambda$) with the domain $D$ (here  $D$ is as in Theorem \ref{thm:ChFam_knownSub-r}) is closable and the closure generates a strongly continuous semigroup $(T^f_t)_{t\geq0}$ on $X$. Then due to Theorem \ref{TH5.1}, Theorem \ref{thm:Technomag mult pert} and  Lemmas \ref{le:1} -- \ref{le:5}  the family $(\widetilde{\mathcal{F}}(t))_{t\geq 0}$  of operators on $(X,\|\cdot\|_\infty)$ defined by $\widetilde{\mathcal{F}}(0)=\id$ and
\begin{equation}\label{eq:mathcal(F)}
\widetilde{\mathcal{F}}(t)\ffi=  e^{-\sigma t}\circ   \widetilde{F}(t)\circ \mathcal{F}_0(t)\ffi,\quad t>0,\, \ffi\in X,
\end{equation}
with $(\mathcal{F}_0(t))_{t\geq0}$ as in Theorem \ref{thm:ChFam_knownSub-r} and with 
$(\widetilde{F}(t))_{t\geq0}$ such that
\begin{equation}\label{eq:widetilde F}
\widetilde{F}(t)\ffi(x):=\left(F(\lambda(x)t)\ffi   \right)(x), \quad \forall\ffi\in X,\quad\forall x\in Q,
\end{equation}
is Chernoff equivalent to  the semigroup $(T^f_t)_{t\geq0}$.
\end{remark}

\subsection{Case 2:  L\'{e}vy measures of subordinators are known and bounded}\label{sec:knownLevy}
In this 
subsection we again consider the semigroup $(T^f_t)_{t\geq 0}$ subordinate   to a given semigroup $(T_t)_{t\geq 0}$ with respect to a given convolution semigroup $(\eta_t)_{t\geq 0}$ associated to a Bernstein function $f$ defined by a triplet $(\sigma,\lambda,\mu)$.  We assume that the corresponding convolution semigroup $(\eta^0_t)_{t\geq 0}$ associated to the Bernstein function $f_0$ defined by the triplet $(0,0,\mu)$ is not known explicitly. In this case  the family $(\mathcal{F}_0(t))_{t\geq0}$ of Theorem \ref{thm:ChFam_knownSub-r} is not known explicitly as well, and hence the formula \eqref{eq:mathcal(F)} is not proper  for direct computations any more. Let us assume that  the L\'{e}vy measure $\mu$ of $(\eta^0_t)_{t\geq 0}$ is given explicitly and is bounded (and nonzero). In this case the generator $L^{\eta^0}$  of the corresponding semigroup  $S^{\eta^0}_t$ is a bounded linear operator given as in \eqref{eq: L^eta}. The generator $L^{f_0}$ of the semigroup  $(T^{f_0}_t)_{t\geq 0}$ subordinate   to $(T_t)_{t\geq 0}$ with respect to  $(\eta^0_t)_{t\geq 0}$ is given by \eqref{eq:L^f0} and is also bounded.  Therefore, the semigroup $(T^{f_0}_t)_{t\geq 0}$ can be constructed, e.g., via  Taylor series representation. We use another approach. 

\begin{theorem}\label{thm:ChFam_known bdd Levy}
Let $(T_t)_{t\geq 0}$  be a strongly continuous contraction semigroup on a Banach space $(X,\|\cdot\|_X)$ with the generator $(L,\D(L))$. Let  $(F(t))_{t\geq 0}$ be a family of contraction operators on $(X,\|\cdot\|_X)$ which is Chernoff equivalent to $(T_t)_{t\geq 0}$, i.e. $F(0)=\id$,  $\|F(t)\|\le 1$ for all  $t\geq 0$ and there is a set $D\subset \D(L)$, which is a core for $L$, such that  $\lim_{t\to0}\big\|\frac{F(t)\ffi-\ffi}{t}-L\ffi\big\|_X=0$ for each $\ffi\in D$. Let $f$ be a Bernstein function  given by a triplet $(\sigma,\lambda,\mu)$ through  the representation \eqref{eq:BernsteinFunc}  with associated convolution semigroup $(\eta_t)_{t\geq 0}$ supported by $[0,\infty)$.
Assume that the measure $\mu$ is bounded. 
 Let $m:(0,\infty)\to\mathds{N}_0$ be a monotone function with $m(t)\to\infty$ as $t\to0$. 
 Let $(T^f_t)_{t\geq 0}$ be the semigroup subordinate   to $(T_t)_{t\geq 0}$ with respect to  $(\eta_t)_{t\geq 0}$ and $L^f$ be  its generator. Consider a family   $(\mathcal{F}_\mu(t))_{t\geq 0}$  of operators on $(X,\|\cdot\|_X)$ defined for all $\ffi\in X$ and all $t\geq0$ by 
\begin{equation}\label{eq:mathcal(F)mu}
\mathcal{F}_\mu(t)\ffi=  e^{-\sigma t} F(\lambda t)\left( \ffi+ t \intl_{0+}^\infty (F^{m(t)}(s/m(t))\ffi-\ffi)\mu(ds)   \right).
\end{equation}
The family $(\mathcal{F}_\mu(t))_{t\geq 0}$ is Chernoff equivalent to the semigroup $(T^f_t)_{t\geq 0}$ and, hence
$$
T^f_t \ffi=\lim_{n\to\infty} \big[\mathcal{F}_\mu(t/n)   \big]^n \ffi
$$
for all $\ffi\in X$ locally uniformly with respect to $t\geq  0$.
\end{theorem}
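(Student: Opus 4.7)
The plan is to verify the four conditions of the Chernoff Theorem~\ref{thm:Chernoff} for the family $(\mathcal{F}_\mu(t))_{t\geq 0}$ with respect to the semigroup $(T^f_t)_{t\geq 0}$, using the given set $D$ as the subspace on which the derivative at the origin is computed. That $D$ is a core for $L^f$ follows from the fact that $D$ is a core for $L$ together with Prop.~32.5 of \cite{MR1739520}, recalled right after \eqref{eq:L^f}. I therefore need to check (a) $\mathcal{F}_\mu(0)=\id$, (b) a uniform bound $\|\mathcal{F}_\mu(t)\|\le e^{at}$, (c) strong continuity of $t\mapsto\mathcal{F}_\mu(t)\ffi$, and (d) the identification $\mathcal{F}_\mu'(0)\ffi=L^f\ffi$ for $\ffi\in D$.

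Items (a) and (b) are essentially bookkeeping. Item (a) is immediate from \eqref{eq:mathcal(F)mu} since the correction term carries the factor $t$. For (b), set $M:=\mu((0,\infty))<\infty$; since each $F^k$ is a contraction one has $\|F^{m(t)}(s/m(t))\ffi-\ffi\|_X\le 2\|\ffi\|_X$ for every $s>0$, hence the bracketed expression in \eqref{eq:mathcal(F)mu} has norm at most $(1+2tM)\|\ffi\|_X\le e^{2Mt}\|\ffi\|_X$; combined with $\|F(\lambda t)\|\le 1$ and $\sigma\geq 0$ this gives $\|\mathcal{F}_\mu(t)\|\le e^{2Mt}$.

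For (c), strong continuity at $t_0=0$ is immediate because the outer factor $t$ kills the integral term while $e^{-\sigma t}F(\lambda t)\to\id$ strongly. At a point $t_0>0$ one argues as in Lemma~\ref{le:2}: the difference $\|F^{m(t)}(s/m(t))\ffi-F^{m(t_0)}(s/m(t_0))\ffi\|_X$ is bounded by a finite telescoping sum of norm differences of the form $\|F(s/m(t))\psi-F(s/m(t_0))\psi\|_X$, the number of terms being controlled by the local maximum $M_0:=\max_{[t_0-\eps,t_0+\eps]}m(t)<\infty$, and each of these tends to zero by strong continuity of $F$. Compared to Lemma~\ref{le:2}, the tail estimate in $s$ is trivial here because $\mu$ is finite, so no truncation at large $s$ is needed.

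The main and most delicate step is (d). Splitting
\[
\frac{\mathcal{F}_\mu(t)\ffi-\ffi}{t}
=\frac{e^{-\sigma t}F(\lambda t)\ffi-\ffi}{t}
+e^{-\sigma t}F(\lambda t)\intl_{0+}^\infty\bigl(F^{m(t)}(s/m(t))\ffi-\ffi\bigr)\mu(ds),
\]
the first summand converges to $-\sigma\ffi+\lambda L\ffi$ for $\ffi\in D$ (expand $e^{-\sigma t}=1-\sigma t+o(t)$ and invoke the Chernoff-equivalence hypothesis $\lim_{t\to 0}\|(F(\lambda t)\ffi-\ffi)/(\lambda t)-L\ffi\|_X=0$). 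For the second summand, the Chernoff Theorem applied to $(T_t)_{t\geq 0}$ itself yields $F^{m(t)}(s/m(t))\ffi\to T_s\ffi$ in $X$ as $t\to 0$, locally uniformly in $s$; since the integrand is pointwise bounded by $2\|\ffi\|_X$ and $\mu$ is finite, dominated convergence gives $\intl_{0+}^\infty(F^{m(t)}(s/m(t))\ffi-\ffi)\mu(ds)\to\intl_{0+}^\infty(T_s\ffi-\ffi)\mu(ds)$, while $e^{-\sigma t}F(\lambda t)$ tends strongly to $\id$. Adding the two limits reproduces the representation \eqref{eq:L^f} of $L^f\ffi$, finishing the verification. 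The hard part is really this dominated-convergence passage to the limit inside the $\mu$-integral; the finiteness of $\mu$ trivialises it here, whereas in Theorem~\ref{thm:ChFam_knownSub-r} (where $\mu$ may be unbounded) this was precisely the source of the delicate work done in Lemmas~\ref{le:estimate of Psi}--\ref{le:5}.
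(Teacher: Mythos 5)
Your proof is correct and its analytic core coincides with the paper's: the decisive step in both arguments is passing to the limit in $\int_{0+}^\infty\bigl(F^{m(t)}(s/m(t))\ffi-\ffi\bigr)\mu(ds)$ using the locally uniform convergence $F^{m(t)}(s/m(t))\ffi\to T_s\ffi$ supplied by the Chernoff Theorem together with the finiteness of $\mu$ (you phrase this as dominated convergence with dominating function $2\|\ffi\|_X$, the paper as an explicit $\eps$--$R_\eps$ truncation of the integral; these are interchangeable). The only genuine difference is organizational: the paper factors $\mathcal{F}_\mu(t)=e^{-\sigma t}F(\lambda t)\circ F_\mu(t)$ with $F_\mu(t)\ffi=\ffi+t\int_{0+}^\infty(F^{m(t)}(s/m(t))\ffi-\ffi)\mu(ds)$, proves that the inner family $(F_\mu(t))_{t\geq0}$ is Chernoff equivalent to the subordinate semigroup $(T^{f_0}_t)_{t\geq0}$ associated with the triplet $(0,0,\mu)$, and then invokes the composition Theorem~\ref{TH5.1} to absorb the killing and drift factors, whereas you differentiate the product by hand and identify $\mathcal{F}_\mu'(0)\ffi=-\sigma\ffi+\lambda L\ffi+\int_{0+}^\infty(T_s\ffi-\ffi)\mu(ds)=L^f\ffi$ on $D$ directly, using that $D$ is a core for $L^f$. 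Your route is self-contained (it does not need Theorem~\ref{TH5.1}) at the cost of redoing the product differentiation; it is also slightly more complete on one point, namely you verify strong continuity of $t\mapsto\mathcal{F}_\mu(t)\ffi$ at interior points $t_0>0$ by the telescoping argument of Lemma~\ref{le:2}, whereas the paper's proof of this theorem only records continuity at $t=0$. Both proofs reach the same conclusion by the same essential estimate.
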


\begin{proof}
Let us prove that the family $(F_\mu(t))_{t\geq0}$, defined for all $\ffi\in X$ and all $t\geq0$ by
\begin{equation}\label{eq:F mu}
F_\mu(t)\ffi:= \ffi+ t \intl_{0+}^\infty (F^{m(t)}(s/m(t))\ffi-\ffi)\mu(ds),
\end{equation}
is Chernoff equivalent to the semigroup $(T^{f_0}_t)_{t\geq 0}$ which is subordinate   to $(T_t)_{t\geq 0}$ with respect to  $(\eta^0_t)_{t\geq 0}$ associated to the Bernstein function $f_0$ defined by the triplet $(0,0,\mu)$. Then  the statement of Theorem \ref{thm:ChFam_known bdd Levy} follows immediately from Theorem \ref{TH5.1}. So, let $K:=\mu(\cR)<\infty$. Then,   clearly,
$F_\mu(0)=\id$,  
\begin{align*}
\|F_\mu(t)\ffi\|_X\leq \|\ffi\|_X+tK\left\| F^{m(t)}(s/m(t))\ffi-\ffi \right\|_X\leq \|\ffi\|_X(1+2tK)\leq e^{2tK}\|\ffi\|_X 
\end{align*} 
 \begin{align*}
\text{and }\quad \|F_\mu(t)\ffi-\ffi\|_X\leq tK\left\| F^{m(t)}(s/m(t))\ffi-\ffi \right\|_X\leq 2tK\|\ffi\|_X\to0,\quad t\to0.
 \end{align*}
 Further, for an arbitrary  $\eps>0$  choose $R_\eps$ such that $\int_{R_\eps}^\infty\mu(ds)<\eps$. Then for each $\ffi\in D$
 \begin{align*}
&\liml_{t\to0}\left\|  \frac{F_\mu(t)\ffi-\ffi}{t} -L^{f_0}\ffi \right\|_X=\liml_{t\to0}\left\| \intl_{0+}^\infty (F^{m(t)}(s/m(t))\ffi-T_s\ffi)\mu(ds) \right\|_X \leq\\
&
\leq \liml_{t\to0}\left[\intl_{0+}^{R_\eps} \|F^{m(t)}(s/m(t))\ffi-T_s\ffi\|_X\mu(ds)+\intl_{R_\eps}^\infty \|F^{m(t)}(s/m(t))\ffi-T_s\ffi\|_X\mu(ds)\right]\leq \\
&
2\|\ffi\|_X \eps+K\liml_{t\to0}\sup\limits_{s\in[0,R_\eps]}\|F^{m(t)}(s/m(t))\ffi-T_s\ffi\|_X = 2\|\ffi\|_X \eps
 \end{align*}
 due to the Chernoff Theorem. Therefore, 
 $$
 \liml_{t\to0}\left\|  \frac{F_\mu(t)\ffi-\ffi}{t} -L^{f_0}\ffi \right\|_X=0
 $$
 which proves the statement of Theorem \ref{thm:ChFam_known bdd Levy}.
\end{proof}

\begin{remark}
 The choise of $F_\mu(t)$ is motivated by the fact that  for each bounded linear operator $A$ the family $F_A(t):=\id+tA$  is obviously Chernoff equivalent to the semigroup $e^{tA}$. We have, however, the family $F_A(t):=\id+tA(t)$, where operators $A(t)$ are bounded and tend to the generator $A$ as $t\to0$.
The natural question arises:  if it is possible to find the family $F_A(t):=\id+tA(t)$, where operators $A(t)$ are bounded and tend to the unbounded generator $A$ of the semigroup $e^{tA}$ as $t\to0$, such that $F_A(t)$ would be Chernoff equivalent to $e^{tA}$? In this case it would be possible   to generalize  Theorem \ref{thm:ChFam_known bdd Levy} to the case of unbounded L\'{e}vy measure $\mu$, e.g., by approximating $\mu$ with  bounded measures $\mu_t:=1_{[\alpha(t),\infty)}\mu$ for some proper function $\alpha(t)\to0$ as $t\to0$.
However, the answer is NO, since the required in the Chernoff Theorem norm estimate $\|F_A(t)\|\leq e^{ct}$  for all $t\geq0$ and some $c\in\cR$ (or the equivalent one $\|F(A(t)\|^k\leq Me^{ckt}$ for all $k\in\NN$,  $t\geq0$ and some $c\in\cR$, $M\geq 1$, cf. \cite{MR710486})  fails.
\end{remark}

\begin{remark}
The analogue of Remark \ref{rem:variable b} is true also for the family $(\widetilde{\mathcal{F}}_\mu(t))_{t\geq 0}$, 
\begin{equation*}
\widetilde{\mathcal{F}}_\mu(t)\ffi:=  e^{-\sigma t} \widetilde{F}(t)\left( \ffi+ t \intl_{0+}^\infty (F^{m(t)}(s/m(t))\ffi-\ffi)\mu(ds)   \right), 
\end{equation*} 
with $\widetilde{F}(t)$ as in \eqref{eq:widetilde F}.
\end{remark}

\section{Applications}
\subsection{Approximation of subordinate   Feller semigroups}

A \emph{Feller process} $(X_t)_{t\geq0}$ with a state space $\cR^d$ is a strong Markov process whose associated operator semigroup $(T_t)_{t\geq0}$, $T_t\ffi(x):=\mathbb{E}^x\left[ \ffi(X_t)  \right]$, $\ffi\in C_\infty(\cR^d)$, $t\geq0$, $x\in\cR^d$, is a strongly continuous positivity preserving\footnote{A semigroup $(T_t)_{t\geq0}$ on $C_\infty(\cR^d)$ is positivity preserving if $T_t\ffi\geq0 $ for all $\ffi\in C_\infty(\cR^d)$ with $\ffi\geq0$ and all $t>0$. } contraction semigroup on $C_\infty(\cR^d)$. The semigroup $(T_t)_{t\geq0}$ is said to be a \emph{Feller semigroup}.  Due to results of P. Courr\`{e}ge \cite{MR0245085}, \cite{Courrege}, and W. von Waldenfels \cite{vWaldenfels},  \cite{MR0166603}, \cite{MR0193678}, if the set $C^\infty_c(\cR^d)$ of all infinitely differentiable functions with compact support belongs to the domain $\Dom(L)$ of the generator $L$ of a Feller semigroup $(T_t)_{t\geq0}$, then the restriction of $L$ onto $C^\infty_c(\cR^d)$ is a pseudo-differential operator (PDO for short) with symbol $-H$, i.e.
$$
L\ffi(x):=-(2\pi)^{-d}\intl_{\cR^d}\intl_{\cR^d}e^{ip\cdot (x-q)}H(x,p)\ffi(q)dqdp,
$$
the function $H:\cR^d\times\cR^d\to\cC$ is measurable, locally bounded in both variables $(q,p)$ and for each fixed $q$ satisfies the L\'{e}vy-Khintchine representation
\begin{align}\label{eq:Levy-Khintchine}
H(q,p)=C(q)+iB(q)\cdot p +\frac12 \,p\cdot A(q)p+\intl_{\cR^d\setminus \{0\}}\left( 1-e^{iy\cdot p}+\frac{iy\cdot p}{1+|y|^2}   \right)\nu(q,dy),
\end{align}
where, for each fixed $q$,  $C(q)\geq0$, $B(q)\in\cR^d$, $A(q)$ is a   positive semidefinite symmetric matrix  and   $\nu(q,\cdot)$ is a positive Radon measure on $\cR^d\setminus \{0\}$ satisfying $\int_{\cR^d\setminus \{0\}}\min(1,|y|^2)\nu(q,dy)<\infty$. Each operator $T_t$ can itself be represented as a PDO with the symbol $\Lambda_t(q,p):=\mathbb{E}^q\left[e^{ip\cdot(X_t-q) } \right]$. If $(X_t)_{t\geq0}$ is a L\'{e}vy process, we have $H(q,p)=H(p)$ and $\Lambda_t(q,p)=e^{-tH(p)}$.  In general, however, $\Lambda_t(q,p)\neq e^{-tH(q,p)}$. Nevertheless, the family $(F(t))_{t\geq0}$ of PDOs with symbol $e^{-tH(q,p)}$ are Chernoff equivalent to $(T_t)_{t\geq0}$. Namely, by Theorem 3.1 and  Remark 3.2 
 of \cite{MR2999096}, the following statement is true.
\begin{proposition}\label{prop:BShcS}
(i) Let the function $H:\cR^d\times\cR^d\to\cC$ be measurable and locally bounded in both variables, $H(q,\cdot)$ satisfy for each fixed $q$ the L\'{e}vy-Khintchine representation \eqref{eq:Levy-Khintchine}. Assume that
\begin{align*}\label{a}
&\displaystyle\sup_{q\in\rn} |H(q,p)| \leq
                 \kappa(1+|p|^2)\quad \text{for all}\quad p\in\rn \quad\text{and some}\quad \kappa>0 ,\\
&
  \displaystyle p\mapsto H(q,p)\quad \text{is uniformly (w.r.t.} \quad q\in\rn \text{)
                 continuous at}\quad p=0,\\
&
\displaystyle q\mapsto H(q,p)\quad \text{is continuous for
all}\quad p\in\rn.
\end{align*}
Assume also that  the PDO with symbol $-H$ defined on $C^\infty_c(\cR^d)$ is closable in $C_\infty(\cR^d)$  and the closure generates a Feller semigroup $(T_t)_{t\geq0}$. Consider  the family  $(F(t))_{t\geq0}$,
$$
F(t)\ffi(x):=(2\pi)^{-d}\intl_{\cR^d}\intl_{\cR^d} e^{ip\cdot(x-q)}e^{-tH(x,p)}\ffi(q)dqdp,\quad \forall\,\ffi\in C^\infty_c(\cR^d).
$$
Then the operators $F(t)$ can be extended to  contractions on  $C_\infty(\cR^d)$ and the family $(F(t))_{t\geq0}$ is Chernoff equivalent to the semigroup $(T_t)_{t\geq0}$, i.e., for each $\ffi\in C_\infty(\cR^d)$ the Chernoff approximation   $T_t \ffi= \lim_{n\to\infty}\left[F(t/n)\right]^n\ffi$ holds.  

\noindent (ii) Assume additionally that the function $H$  satisfies the following condition:
\begin{equation*}
\exists\, C>0\quad \,\mbox{such that }\,
\big\|\partial^\alpha_q\partial^\beta_p
e^{tH}\big\|_{L^\infty(\rn\times\rn)}\le C,
\end{equation*}
where $ \alpha,\,\beta\in \mathds{N}_0^d$,  $\, \alpha=0\,\mbox{ or
} \,1$, $  \beta=0\,\mbox{ or } \,1$,
$\,\partial^\alpha_q\partial^\beta_p$ are derivatives in  the distributional
sense.  Then  the operators 
 $F(t)$ extend to bounded linear operators on $ L^2(\real^d)$.
 And the Chernoff approximation turns for each $\ffi\in C_\infty(\cR^d)\cap L^2(\cR^d)$ into  the following Feynman formula (with $q_{n+1}:=q$): 
\begin{align*}
&(T_t\ffi)(q)\\
\notag&=\lim\limits_{n\rightarrow\infty}(2\pi)^{-dn}\int\limits_{(\real^d)^{2n}}
e^{i\sum\limits_{k=1}^n p_k\cdot(q_{k+1}-q_k)}e^{-\frac{t}{n}\sum\limits_{k=1}^n
H(q_{k+1},p_k)}\ffi(q_1)dq_1dp_1\cdots dq_ndp_n,
\end{align*}
where the equality holds  in $L^2$-sense and  the integrals in the right hand side must be considered in a regularized sense. 
\end{proposition}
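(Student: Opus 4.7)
The plan is to verify the hypotheses of the Chernoff Theorem for the family $(F(t))_{t\geq0}$ with respect to the Feller semigroup $(T_t)_{t\geq0}$, taking $D:=C^\infty_c(\rn)$ as the core (which is a core for $L$ by the closability assumption). For part (i) four items must be established: (a) each $F(t)$ extends to a contraction on $C_\infty(\rn)$ with $F(0)=\id$; (b) strong continuity of $t\mapsto F(t)$ at zero; (c) for every $\ffi\in D$, the norm convergence $\|t^{-1}(F(t)\ffi-\ffi)-L\ffi\|_\infty\to0$. Part (ii) then needs the $L^2$-boundedness and the explicit phase-space iteration.

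For (a), the key observation is that, for each fixed $x$, the function $p\mapsto H(x,p)$ is continuous and negative definite, which is exactly the content of the L\'{e}vy--Khintchine representation \eqref{eq:Levy-Khintchine}. Hence $p\mapsto e^{-tH(x,p)}$ is the characteristic function of a sub-probability measure $\mu_{t,x}$ on $\rn$ (with total mass $e^{-tC(x)}\leq 1$), and inverse Fourier transformation rewrites the oscillatory definition of $F(t)$ as
$$
F(t)\ffi(x)=\intl_{\rn}\ffi(x-y)\,\mu_{t,x}(dy).
$$
The contraction bound $\|F(t)\ffi\|_\infty\leq\|\ffi\|_\infty$ is then immediate, and mapping into $C_\infty(\rn)$ follows from the joint continuity and local boundedness of $H$ together with dominated convergence. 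For (b), the uniform-in-$x$ continuity of $H(x,\cdot)$ at $p=0$ implies that $\mu_{t,x}\to\delta_0$ as $t\to 0$ uniformly in $x$; strong continuity at zero is then checked first on $C^\infty_c(\rn)$ (using uniform continuity of $\ffi$) and extended to $C_\infty(\rn)$ by density and the uniform contraction bound.

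For (c), use that $\wh\ffi$ is Schwartz when $\ffi\in C^\infty_c(\rn)$, so
$$
\frac{F(t)\ffi(x)-\ffi(x)}{t}=(2\pi)^{-d}\intl_{\rn} e^{ip\cdot x}\,\frac{e^{-tH(x,p)}-1}{t}\,\wh\ffi(p)\,dp.
$$
The difference quotient satisfies $|(e^{-tH(x,p)}-1)/t|\leq|H(x,p)|\leq\kappa(1+|p|^2)$ uniformly in $x$ and in $t\in(0,1]$, and $(1+|p|^2)|\wh\ffi(p)|$ is integrable. Dominated convergence yields pointwise-in-$x$ convergence to $-(2\pi)^{-d}\intl_{\rn}e^{ip\cdot x}H(x,p)\wh\ffi(p)\,dp=L\ffi(x)$, and the uniform-in-$x$ convergence (hence convergence in $\|\cdot\|_\infty$) is obtained from the same integrable majorant combined with joint continuity of $(x,p)\mapsto H(x,p)$. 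With (a)--(c) established, the Chernoff Theorem applies and gives the claimed approximation.

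For part (ii), the additional derivative bound on $e^{tH}$ places the symbol in a Calder\'{o}n--Vaillancourt-type class, so a standard integration-by-parts argument against $\partial^\alpha_q$ and $\partial^\beta_p$ yields the $L^2$-boundedness of $F(t)$, uniformly for $t$ in any compact subinterval of $[0,\infty)$. Substituting the defining integral of $F(t/n)$ exactly $n$ times and labelling the resulting position/momentum pairs $(q_k,p_k)_{k=1}^n$ (with $q_{n+1}:=q$) produces the claimed $2dn$-fold phase-space integral; the Chernoff convergence in $C_\infty(\rn)$ together with the uniform $L^2$-bound then upgrades to $L^2$-convergence on $C_\infty(\rn)\cap L^2(\rn)$. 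The hard part throughout is this last step: the inner integrals are only conditionally convergent, so one must specify a regularization (for instance inserting a Gaussian damping $e^{-\eps|p_k|^2}$ and sending $\eps\to0$ after performing the $q$-integration) in which the iterated integrals are to be interpreted, and one has to verify that the regularized integrals are compatible with the $L^2$-bound delivered by the derivative hypotheses.
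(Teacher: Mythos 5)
The paper itself gives no proof of this proposition: it is quoted from Theorem 3.1 and Remark 3.2 of \cite{MR2999096}, so the only comparison available is with the argument of that reference. Your outline reproduces that argument in its essentials: the identification of $p\mapsto e^{-tH(x,p)}$ as the characteristic function of a sub-probability measure $\mu_{t,x}$ (Bochner's theorem applied to the continuous negative definite function $H(x,\cdot)$), the resulting representation $F(t)\ffi(x)=\int_{\rn}\ffi(x-y)\,\mu_{t,x}(dy)$ yielding the contraction property, the Fourier-side computation of the derivative at $t=0$ on the core $C^\infty_c(\rn)$, and a Calder\'on--Vaillancourt/Hwang-type bound for the $L^2$-theory in (ii). This is the right skeleton.

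Two steps, however, are not closed as written. First, in (c) you obtain pointwise convergence of $t^{-1}(F(t)\ffi-\ffi)$ to $L\ffi$ by dominated convergence and then claim uniformity in $x$ from ``the same integrable majorant combined with joint continuity of $H$''. Joint continuity only gives local uniformity, and $\rn$ is not compact; since $F(t)\ffi(x)$ is built from $e^{ip\cdot x}\wh\ffi(p)$, it does not localize to $\supp\ffi$, so this does not suffice. The standard repair is the elementary inequality $|e^{-z}-1+z|\le|z|^2/2$, valid for $\Re z\ge0$ (and $\Re H\ge0$ by the L\'evy--Khintchine form), which gives
$$
\left\|\frac{F(t)\ffi-\ffi}{t}-L\ffi\right\|_\infty\le\frac{t}{2}(2\pi)^{-d}\intl_{\rn}\kappa^2(1+|p|^2)^2|\wh\ffi(p)|\,dp\longrightarrow0,
$$
uniformly in $x$ with no continuity argument needed. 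Second, in (a) the assertion that $F(t)$ maps into $C_\infty(\rn)$ (vanishing at infinity) cannot follow from local boundedness and dominated convergence alone: one needs tightness of the family $\{\mu_{t,x}\}_{x\in\rn}$ uniformly in $x$, and this is precisely what the hypothesis that $p\mapsto H(q,p)$ is continuous at $p=0$ uniformly in $q$ delivers. That hypothesis is otherwise unused in your write-up, which signals the skipped step. With these repairs the argument matches the cited proof; the caveats you raise about the regularization of the oscillatory integrals in (ii) are appropriate and are likewise present in \cite{MR2999096}.
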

Let now the semigroup $(T^f_t)_{t\geq0}$ be subordinate   to the semigroup $(T_t)_{t\geq 0}$ with respect to a given convolution semigroup $(\eta_t)_{t\geq 0}$ associated to a Bernstein function $f$ defined by a triplet $(\sigma,\lambda,\mu)$. The statement below follows immediately from Theorem \ref{thm:ChFam_knownSub-r}, Theorem \ref{thm:ChFam_known bdd Levy} and Proposition \ref{prop:BShcS}.
\begin{theorem}
(i) Under all assumptions and notations of Proposition \ref{prop:BShcS} and Theorem \ref{thm:ChFam_knownSub-r}  the following Feynman formula is true for each $\ffi\in C_\infty(\cR^d)\cap L^2(\cR^d)$ with $N:=1+m(t/n)$ and $q_{1+nN}:=q$
\begin{align*}
&(T^f_t\ffi)(q)=\lim\limits_{n\rightarrow\infty}(2\pi)^{-2dnN}e^{-\sigma t}\int\limits_{(0,\infty)^{n}}\int\limits_{\real^{2dnN}}\exp\left({-\frac{\lambda t}{n}\suml_{j=1}^n H(q_{1+jN},p_{jN})}\right)\times\\
\notag&
\times  \exp\left({i \suml_{j=1}^n \suml_{k=1}^{N} p_{k+(j-1)N}\cdot(q_{k+1+(j-1)N}-q_{k+(j-1)N}) }\right)\times\\
&
\times \exp\left({-\suml_{j=1}^n \frac{s_j}{N-1} \suml_{k=1}^{N-1} H(q_{k+1+(j-1)N}, p_{k+(j-1)N}) }\right)\ffi(q_1)\times\\
&
\times\prodl_{j=1}^n \prodl_{k=1}^{N-1}dq_{k+(j-1)N}dp_{k+(j-1)N}\eta^0_{t/n}(ds_j). 
\end{align*}

\noindent (ii) Under all assumptions and notations of Proposition \ref{prop:BShcS} and Theorem \ref{thm:ChFam_known bdd Levy}  the following Feynman formula is true for each $\ffi\in C_\infty(\cR^d)\cap L^2(\cR^d)$ with $N:=1+m(t/n)$ and $K:=(\mu(0,\infty))^{-1}<\infty$
\begin{align*}
&(T^f_t\ffi)(q)=\lim\limits_{n\rightarrow\infty}(2\pi)^{-2dnN}e^{-\sigma t}\int\limits_{(0,\infty)^{n}}\int\limits_{\real^{2dnN}}\exp\left({-\frac{\lambda t}{n}\suml_{j=1}^n H(q_{1+jN},p_{jN})}\right)\times\\
\notag&
\times \exp\left({i \suml_{j=1}^n \suml_{k=1}^{N} p_{k+(j-1)N}\cdot(q_{k+1+(j-1)N}-q_{k+(j-1)N}) }\right)\times\\
&
\times \prodl_{j=1}^n\left[(K-t)+   t\exp\left({- \frac{s_j}{N-1} \suml_{k=1}^{N-1} H(q_{k+1+(j-1)N}, p_{k+(j-1)N}}) \right)\right]\ffi(q_1)\times\\
&
\times\prodl_{j=1}^n \prodl_{k=1}^{N-1}dq_{k+(j-1)N}dp_{k+(j-1)N}\mu(ds_j), 
\end{align*}
The identities in both Feynman formulae hold  in $L^2$-sense and  the integrals in the right hand sides must be considered in a regularized sense. 
\end{theorem}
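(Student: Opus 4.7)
The sentence preceding the theorem announces that both parts follow immediately from Theorems~\ref{thm:ChFam_knownSub-r},~\ref{thm:ChFam_known bdd Levy} and Proposition~\ref{prop:BShcS}; my plan is to spell out the underlying substitution and bookkeeping. The key is to take the Chernoff family $(F(t))_{t\geq0}$ appearing in the two subordination theorems to be the PDO family with symbol $e^{-tH(x,p)}$ from Proposition~\ref{prop:BShcS}. Under the extra smoothness assumption in Proposition~\ref{prop:BShcS}(ii) these operators extend to bounded operators on $L^2(\cR^d)$, so the Chernoff limits $T^f_t\ffi = \lim_{n\to\infty}[\mathcal{F}(t/n)]^n\ffi$ and $T^f_t\ffi = \lim_{n\to\infty}[\mathcal{F}_\mu(t/n)]^n\ffi$ hold in $L^2$-sense and the resulting iterated oscillatory integrals make sense in the regularized sense.

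For part (i) I unpack $\mathcal{F}(t/n)\ffi = e^{-\sigma t/n}\,F(\lambda t/n)\bigl(\int_{0+}^\infty [F(s/m(t/n))]^{m(t/n)}\ffi\,\eta^0_{t/n}(ds)\bigr)$. Inside the $\eta^0_{t/n}$-average the operator is a composition of $N := 1+m(t/n)$ PDOs: the last-applied one with parameter $\lambda t/n$ and the first-applied $N-1$ copies with parameter $s/(N-1)$. Iterated use of the defining formula for PDOs, together with Fubini, writes this composition as a $2dN$-fold oscillatory integral whose integrand is a product of phase factors $e^{ip_k\cdot(q_{k+1}-q_k)}$ and symbol factors $e^{-t_k H(q_{k+1},p_k)}$ with the appropriate $t_k$, with $q_1$ as input and $q_{N+1}$ as output. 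Taking the $n$-th power stacks $n$ such blocks by identifying the output of block $j-1$ with the input of block $j$; pulling the $n$ subordinator integrations $\eta^0_{t/n}(ds_j)$ outside by Fubini and relabelling indices yields exactly the formula in~(i).

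For part (ii) I first rewrite the bracket in $\mathcal{F}_\mu(t)\ffi = e^{-\sigma t}F(\lambda t)(\ffi + t\int_{0+}^\infty (F^{m(t)}(s/m(t))\ffi - \ffi)\mu(ds))$, using $\mu(0,\infty)=1/K$, as $\int_{0+}^\infty [(K-t)\ffi + tF^{m(t)}(s/m(t))\ffi]\mu(ds)$. The $n$-fold composition then becomes, by Fubini, $e^{-\sigma t}\int_{(0,\infty)^n}\prod_{j=1}^n F(\lambda t/n)\bigl[(K-t/n)\id + (t/n)F^{m(t/n)}(s_j/m(t/n))\bigr]\ffi\,\prod_j \mu(ds_j)$. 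I then expand the product over $j$ and represent each composition of $N$ PDOs by the iterated-integral formula from part~(i), while interpreting the $(K-t/n)\id$ term as the degenerate PDO with inner symbol $e^{-0\cdot H}=1$ (which in the regularized sense collapses the inner $N-1$ position and momentum integrations via Fourier inversion); both contributions are then collected into the single product factor $(K-t/n) + (t/n)\exp(-(s_j/(N-1))\sum_k H(\cdot))$ appearing in the statement of (ii).

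The main technical care, rather than any genuine obstacle, will lie in the regularization of the oscillatory iterated integrals (whose integrands are not absolutely convergent in general) and in the unified inclusion of the $(K-t)\id$ term inside a single $2dnN$-dimensional oscillatory integral in part~(ii). Both issues are already handled in Proposition~\ref{prop:BShcS}(ii), so no new estimates are required; the remaining work is bookkeeping of indices and relabelling of integration variables.
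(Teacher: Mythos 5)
Your proposal matches the paper's treatment: the paper offers no argument beyond the remark that the theorem follows immediately from Theorems~\ref{thm:ChFam_knownSub-r} and~\ref{thm:ChFam_known bdd Levy} and Proposition~\ref{prop:BShcS}, and your unpacking of the composed PDO blocks, the Fubini interchange with the subordinator integrations $\eta^0_{t/n}(ds_j)$ (resp.\ $\mu(ds_j)$), and the absorption of the identity term via $\mu(0,\infty)=1/K$ is exactly the intended bookkeeping. One remark: your derivation correctly yields the factor $(K-t/n)+(t/n)\exp(\cdots)$ in part~(ii), whereas the printed statement shows $(K-t)+t\exp(\cdots)$; this (like the range of the product over $k$ in the volume element) appears to be a typographical slip in the paper rather than a gap in your argument.
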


\subsection{Approximation of subordinate   diffusions in $\cR^d$}
In this subsection we consider the case of Feller diffusion, i.e. we assume that the measure $\nu$ in the Levy--Khintchine representation \eqref{eq:Levy-Khintchine} of the symbol $H$ of generator $(L,\Dom(L))$ is identically zero. In this case the PDO with symbol $-H$ is just a second order differential operator with variable coefficients and the following results are true (see  \cite{Yana_Technomag14}, cf.  \cite{MR2729591}).
\begin{proposition}\label{prop:BGS}
Let $A\in C(\cR^d,\mathcal{L}(\cR^d))$ be such that $A(x)$ is a positive definite symmetric matrix for each $x\in\cR^d$, let $B\in C(\cR^d,\cR^d)$ and let $C\in C(\cR^d,\cR)$ be nonnegative. Consider the operator $L$ defined for all $\ffi\in C^2(\cR^d)$ by the formula
\begin{equation}
L\ffi(x):= \frac12\tr(A(x)\Hess\ffi(x))+B(x)\cdot \nabla\ffi(x)-C(x)\ffi(x).
\end{equation}
Assume that there exists $\alpha\in(0,1]$ such thath the closure of $(L, C^{2,\alpha}_c(\cR^d))$ generates a strongly continuous semigroup $(T_t)_{t\geq 0}$ on the space $C_\infty(\cR^d)$. Consider the family  $(F(t))_{t\geq 0}$ such that for each  $\varphi\in C_\infty(\mathbb{R}^d)$ 
\begin{align}\label{eq:F(t) BGS}
&F(t)\varphi(x)=\\
&
=\frac{\exp(-tC(x))}{\sqrt{\det A(x)(2\pi t)^{d}}}  \intl_{\mathbb{R}^d} \exp\bigg(\frac{-A^{-1}(x)(x-y+tB(x))\cdot(x-y+tB(x))}{2t}\bigg){\varphi}(y)dy,\nonumber
\end{align}
Then the family  $(F(t))_{t\geq 0}$ is Chernoff equivalent to the semigroup  $(T_t)_{t\geq  0}$  and the Chernoff approximation
 \begin{equation*}
T_t \ffi= \lim_{n\to\infty}\big(F(t/n)\big)^n \ffi
\end{equation*}
holds for all $\ffi\in C_\infty(\cR^d)$ locally uniform with respect to   $t \geq0$.  Therefore, the following Feynman formula is true for each  $\varphi\in C_\infty(\mathbb{R}^d)$  with $q_{n+1}:=q$
\begin{align*}
T_t\varphi(q)=&\liml_{n\to\infty} \intl_{\mathbb{R}^{dn}} \exp\bigg(-\suml_{k=1}^n
A^{-1}(q_{k+1})B(q_{k+1})\cdot(q_{k+1}-q_{k})\bigg)\times\\
&
\times\exp\bigg(-\frac{t}{n}\suml_{k=1}^n C(q_{k+1})\bigg)\exp\bigg(-\frac{t}{2n}\suml_{k=1}^n A^{-1}(q_{k+1})B(q_{k+1})\cdot B(q_{k+1}) \bigg)\times\nonumber\\
&
\times \varphi(q_1)\prodl_{k=1}^n p_A(t/n,q_k,q_{k+1}) dq_1 \ldots dq_n,\nonumber
\end{align*}
where for each  $ x,y \in {\mathbb R}^d$, $t > 0$
\begin{equation}\label{eq:p_A}
p_A(t,x,y) :=\frac{1}{\sqrt{\det A(x)(2\pi t)^{d}}} \exp\bigg(-\frac{A^{-1}(x)(x-y)\cdot(x-y)}{2t}\bigg).
\end{equation}
\end{proposition}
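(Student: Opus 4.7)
The plan is to invoke the Chernoff Theorem with the family $(F(t))_{t\geq 0}$ and the core $D=C^{2,\alpha}_c(\mathbb{R}^d)$, which is a core for $L$ by hypothesis. Four items must be checked: (i) $F(0)=\id$, (ii) $\|F(t)\|\leq 1$ (hence $\|F(t)\|\leq e^{at}$ with $a=0$), (iii) $(F(t))_{t\geq 0}$ is strongly continuous, and (iv) $\lim_{t\to 0}\|t^{-1}(F(t)\varphi-\varphi)-L\varphi\|_\infty=0$ for every $\varphi\in D$. Once these hold, Theorem \ref{thm:Chernoff} yields $T_t\varphi=\lim_{n\to\infty}[F(t/n)]^n\varphi$; writing out the $n$-fold composition and using Fubini produces the Feynman formula after the elementary substitution that absorbs the drift into the Gaussian density $p_A$.

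The key observation is the probabilistic reinterpretation of $F(t)$: choose $\sigma(x)$ with $\sigma(x)\sigma(x)^\top=A(x)$ (continuous on $\mathbb{R}^d$ because $A$ is continuous and strictly positive definite) and let $Z\sim\mathcal{N}(0,\id)$. Substituting $y=x+tB(x)-u$ in \eqref{eq:F(t) BGS} gives
\begin{equation*}
F(t)\varphi(x)=e^{-tC(x)}\,\mathbb{E}\!\left[\varphi\!\left(x+tB(x)+\sqrt{t}\,\sigma(x)Z\right)\right].
\end{equation*}
From this, (i) is immediate; for (ii), note $0\leq e^{-tC(x)}\leq 1$ and the kernel is a probability density in $y$, so $\|F(t)\varphi\|_\infty\leq\|\varphi\|_\infty$. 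For (iii), strong continuity at $t=0$ follows from dominated convergence applied to the expectation together with $e^{-tC(x)}\to 1$; mapping $C_\infty$ into $C_\infty$ and strong continuity at $t_0>0$ use continuity of $A,B,C$ and a standard uniform tightness argument.

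The crucial item is (iv). Fix $\varphi\in C^{2,\alpha}_c(\mathbb{R}^d)$ and write the second-order Taylor expansion
\begin{equation*}
\varphi(x+h)=\varphi(x)+\nabla\varphi(x)\cdot h+\tfrac{1}{2}h^\top\Hess\varphi(x)h+R(x,h),
\end{equation*}
where, by Hölder regularity of $\Hess\varphi$ (uniform since $\supp\varphi$ is compact), $|R(x,h)|\leq C_\varphi|h|^{2+\alpha}$ uniformly in $x$. Apply this with $h=tB(x)+\sqrt{t}\,\sigma(x)Z$ and take expectations, using $\mathbb{E}[Z]=0$, $\mathbb{E}[ZZ^\top]=\id$, $\tr(\sigma^\top M\sigma)=\tr(AM)$, and $\mathbb{E}|h|^{2+\alpha}=O(t^{1+\alpha/2})$. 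Combining with $e^{-tC(x)}=1-tC(x)+O(t^2)$ gives uniformly in $x$
\begin{equation*}
F(t)\varphi(x)=\varphi(x)+t\!\left[\tfrac{1}{2}\tr(A(x)\Hess\varphi(x))+B(x)\!\cdot\!\nabla\varphi(x)-C(x)\varphi(x)\right]+o(t),
\end{equation*}
which is exactly $\varphi(x)+tL\varphi(x)+o(t)$.

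The main obstacle is the uniform bound on the Taylor remainder: without some modulus of continuity of $\Hess\varphi$, the remainder is only $o(|h|^2)$ pointwise, and the expectation $\mathbb{E}|h|^2=O(t)$ would yield only $o(1)\cdot O(t)$ rather than $o(t)$ in supremum norm. This is precisely why the hypothesis requires $C^{2,\alpha}_c(\mathbb{R}^d)$ to be in $\Dom(L)$: the Hölder exponent $\alpha>0$ delivers the decisive $O(t^{1+\alpha/2})$ bound uniformly on $\supp\varphi$, and hence uniformly on $\mathbb{R}^d$. With (i)--(iv) established, Theorem \ref{thm:Chernoff} gives the Chernoff approximation, and iterating the integral representation \eqref{eq:F(t) BGS} and completing the square in each Gaussian factor to express everything via $p_A(t/n,q_k,q_{k+1})$ produces the stated Feynman formula.
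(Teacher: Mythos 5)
First, a point of reference: the paper does not actually prove Proposition \ref{prop:BGS}; it is quoted from \cite{Yana_Technomag14} (cf.\ \cite{MR2729591}), so there is no internal proof to compare against. Your proposal reconstructs the standard argument from that literature, and its skeleton is the right one: the probabilistic rewriting $F(t)\ffi(x)=e^{-tC(x)}\,\mathds{E}\big[\ffi\big(x+tB(x)+\sqrt{t}\,\sigma(x)Z\big)\big]$, the contraction bound from $C\geq 0$ together with the kernel being a probability density, and above all the second--order Taylor expansion in which the H\"older continuity of $\Hess\ffi$ for $\ffi\in C^{2,\alpha}_c(\cR^d)$ upgrades the remainder to $O(t^{1+\alpha/2})$. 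You correctly identify that this is exactly the role of the exponent $\alpha$ in the hypothesis; without it the remainder is only $o(t)$ pointwise and the supremum-norm convergence required by the Chernoff theorem would not follow.

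There is, however, one step that fails as literally written. The claims ``uniformly in $x$'' in your item (iv) --- namely $\mathds{E}|h|^{2+\alpha}=O(t^{1+\alpha/2})$ for $h=tB(x)+\sqrt{t}\,\sigma(x)Z$, and the resulting $o(t)$ in supremum norm --- require $\sup_x\big(|B(x)|+\|\sigma(x)\|\big)<\infty$, which does not follow from mere continuity of $A$ and $B$. Worse, for $x$ outside $\supp\ffi$ one has $\ffi(x)=L\ffi(x)=0$, while $F(t)\ffi(x)$ equals $e^{-tC(x)}$ times the mass that a Gaussian centred at $x+tB(x)$ with covariance $tA(x)$ puts on $\supp\ffi$; if $\|A(x)\|$ grows quickly as $|x|\to\infty$ this mass need not be $o(t)$ uniformly in $x$ (indeed $F(t)$ need not even map $C_\infty(\cR^d)$ into itself), and the same growth undermines the tightness argument you invoke for strong continuity. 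The cited sources impose boundedness of $B$ and $C$ and uniform ellipticity/boundedness of $A$ precisely to close these steps; the restatement of the hypotheses in the present paper drops those conditions, and your proof silently inherits the omission. Under the implicit boundedness assumptions your argument is complete in outline, and the final algebraic step --- completing the square to factor each kernel as $p_A$ times $\exp\big(-A^{-1}B\cdot(q_{k+1}-q_k)\big)\exp\big(-\tfrac{t}{2n}A^{-1}B\cdot B\big)$ --- is routine and handled correctly; without them, the ``uniformly in $x$'' assertions constitute a genuine gap.
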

Let now the semigroup $(T^f_t)_{t\geq 0}$ be subordinate   to the semigroup $(T_t)_{t\geq 0}$ with respect to a given convolution semigroup $(\eta_t)_{t\geq 0}$ associated to a Bernstein function $f$ defined by a triplet $(\sigma,\lambda,\mu)$. The statement below follows immediately from Theorem \ref{thm:ChFam_knownSub-r}, Theorem \ref{thm:ChFam_known bdd Levy} and Proposition \ref{prop:BGS}.
\begin{theorem}
(i) Under  assumptions and notations of Proposition \ref{prop:BGS} and Theorem \ref{thm:ChFam_knownSub-r} the  family $(\mathcal{F}(t))_{t\geq0}$ of Theorem \ref{thm:ChFam_knownSub-r}, constructed with the help of the family $(F(t))_{t\geq0}$ given in \eqref{eq:F(t) BGS},  is Chernoff equivalent to the semigroup $(T^f_t)_{t\geq 0}$ and has the following explicit view (with $p_A$ as in \eqref{eq:p_A}):

\begin{align*}
&\mathcal{F}(0):=\id\quad\text{ and  with }\quad q_{m(t)+2}:=q \\
&
\mathcal{F}(t)\varphi(q):= e^{-t\sigma}\intl_{0+}^\infty\intl_{\mathbb{R}^{d(m(t)+1)}}\exp\left(-\left[t\lambda C(q_{m(t)+2})+\frac{s}{m(t)}\suml_{k=1}^{m(t)} C(q_{k+1})\right]\right)\times\\
&
\times\exp\left(-\suml_{k=1}^{m(t)+1}
A^{-1}(q_{k+1})B(q_{k+1})\cdot(q_{k+1}-q_{k})\right)\times\\
&
\times \exp\left(-\frac{s}{m(t)}\suml_{k=1}^{m(t)} A^{-1}(q_{k+1})B(q_{k+1})\cdot B(q_{k+1}) \right)\times\\
&
\times \exp\left(-\frac{t\lambda}{2} A^{-1}(q_{m(t)+2})B(q_{m(t)+2})\cdot B(q_{m(t)+2})\right)         \varphi(q_1)\times\\
&
\times\left[ p_A(t\lambda,q_{m(t)+1},q_{m(t)+2})\prodl_{k=1}^{m(t)}p_A(s/m(t),q_{k},q_{k+1}) \right]
 \prodl_{k=1}^{m(t)}dq_{k}\eta^0_{t}(ds).
\end{align*}
Therefore, 
for each $\ffi\in C_\infty(\cR^d)$ the following Feynman formula is true  with  $N:=1+m(t/n)$ and $q_{1+nN}:=q$
\begin{align*}
&T^f_t\varphi(q)=\\
&
=\liml_{n\to\infty} e^{-t\sigma}\intl_{(0,\infty)^n}\,\,\,\intl_{\mathbb{R}^{dnN}}\exp\left(-\suml_{j=1}^n\left[\frac{t\lambda}{n} C(q_{1+jN})+\frac{s_j}{N-1}\suml_{k=1}^{N-1} C(q_{k+1+(j-1)N})\right]\right)\times\\
&
\times\exp\left(-\suml_{j=1}^n\suml_{k=1}^N
A^{-1}(q_{k+1+(j-1)N})B(q_{k+1+(j-1)N})\cdot(q_{k+1+(j-1)N}-q_{k+(j-1)N})\right)\times\\
&
\times \exp\left(-\suml_{j=1}^n\frac{s_j}{N-1}\suml_{k=1}^{N-1} A^{-1}(q_{k+1+(j-1)N})B(q_{k+1+(j-1)N})\cdot B(q_{k+1+(j-1)N}) \right)\times\\
&
\times \exp\left(-\suml_{j=1}^n\frac{t\lambda}{2n} A^{-1}(q_{1+jN})B(q_{1+jN})\cdot B(q_{1+jN})\right)         \varphi(q_1)\times\\
&
\times\prodl_{j=1}^n\left[ p_A(t\lambda/n,q_{jN},q_{1+jN})\prodl_{k=1}^{N-1}p_A(s_j/(N-1),q_{k+(j-1)N},q_{k+1+(j-1)N}) \right]\times \\
&
\times\prodl_{j=1}^n \prodl_{k=1}^{N-1}dq_{k+(j-1)N}\eta^0_{t/n}(ds_j),
\end{align*}

\noindent (ii) Under  assumptions and notations of Proposition \ref{prop:BGS} and Theorem \ref{thm:ChFam_known bdd Levy} the  family $(\mathcal{F}_\mu(t))_{t\geq0}$ of Theorem \ref{thm:ChFam_known bdd Levy}, constructed with the help of the family $(F(t))_{t\geq0}$ given in \eqref{eq:F(t) BGS},  is Chernoff equivalent to the semigroup $(T^f_t)_{t\geq 0}$ and  has the following explicit view:
\begin{align*}
&\mathcal{F}_\mu(0):=\id\quad\text{ and   } \quad \mathcal{F}_\mu(t)\ffi(q):=\frac{\exp(-t(\sigma +\lambda C(q)))}{\sqrt{\det A(q)(2\pi t\lambda)^{d}}} \times\\
&
\times \intl_{\mathbb{R}^d} \exp\bigg(\frac{-A^{-1}(q)(q-q_{m(t)+1}+t\lambda B(q))\cdot(q-q_{m(t)+1}+t\lambda B(q))}{2t\lambda}\bigg)\Bigg(\ffi(q_{m(t)+1})+\\
&
+t\intl_{0+}^\infty \Bigg[ \,\intl_{\cR^{dm(t)}}\exp\bigg( -\frac{s}{m(t)}\suml_{k=1}^{m(t)}C(q_{k+1})  \bigg)        \prodl_{k=1}^{m(t)} \bigg(\det A(q_{k+1})(2\pi t\lambda)^{d} \bigg)^{-1/2}    \times\\
&
\times \exp\Bigg( -\suml_{k=1}^{m(t)} \frac{A^{-1}(q_{k+1})\Big(q_{k+1}-q_{k}+\frac{s}{m(t)}B(q_{k+1})\Big)\cdot\Big(q_{k+1}-q_{k}+\frac{s}{m(t)}B(q_{k+1})\Big)}{2s/m(t)}  \Bigg)\times\\
&
\times {\varphi}(q_1)dq_1\ldots dq_{m(t)} -\ffi(q_{m(t)+1})\Bigg]\mu(ds)\Bigg)dq_{m(t)+1}.
\end{align*}
\end{theorem}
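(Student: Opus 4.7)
The plan is to deduce this theorem by straightforward application of Theorem \ref{thm:ChFam_knownSub-r} (for part (i)) and Theorem \ref{thm:ChFam_known bdd Levy} (for part (ii)), using as input the Chernoff approximation of the unsubordinated diffusion semigroup $(T_t)_{t\geq 0}$ supplied by Proposition \ref{prop:BGS}. Thus the work is almost entirely computational: combine the general formulae \eqref{eq:mathcal(F)}--\eqref{eq:F_0} (resp.\ \eqref{eq:mathcal(F)mu}) with the specific expression \eqref{eq:F(t) BGS} for $F(t)$, and unravel the resulting iterated integrals.

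First I would verify that the hypotheses of Theorem \ref{thm:ChFam_knownSub-r} (resp.\ Theorem \ref{thm:ChFam_known bdd Levy}) are indeed satisfied by the family $(F(t))_{t\geq 0}$ coming from Proposition \ref{prop:BGS}. By Proposition \ref{prop:BGS}, each $F(t)$ is a contraction on $C_\infty(\cR^d)$, the family is strongly continuous, and $D:=C^{2,\alpha}_c(\cR^d)$ is a core for the closure of $L$ on which $\lim_{t\to 0}\|t^{-1}(F(t)\ffi-\ffi)-L\ffi\|_\infty=0$. The same $D$ is, by the remark following \eqref{eq:L^f}, a core for the generator $L^f$ of the subordinate semigroup $(T^f_t)_{t\geq 0}$. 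Hence Theorems \ref{thm:ChFam_knownSub-r} and \ref{thm:ChFam_known bdd Levy} apply directly, yielding that the families $(\mathcal{F}(t))_{t\geq 0}$ and $(\mathcal{F}_\mu(t))_{t\geq 0}$ are Chernoff equivalent to $(T^f_t)_{t\geq 0}$.

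Next I would compute the explicit action of $\mathcal{F}(t)$ (resp.\ $\mathcal{F}_\mu(t)$). Observe that for the Gaussian-type kernel in \eqref{eq:F(t) BGS}, a direct induction shows that, for any $k\in\NN$ and $\tau>0$,
\begin{align*}
[F(\tau)]^k\ffi(x)=e^{-k\tau C(\cdot)\,\text{folded}}\int_{(\cR^d)^k}\Big[\prod_{j=1}^{k}p_A(\tau,x_j,x_{j+1})\Big]\,e^{-\sum_{j} A^{-1}(x_{j+1})B(x_{j+1})\cdot(x_{j+1}-x_j)}\,e^{-\frac{\tau}{2}\sum_j A^{-1}(x_{j+1})B(x_{j+1})\cdot B(x_{j+1})}\,\ffi(x_1)\,dx_1\cdots dx_k,
\end{align*}
with $x_{k+1}:=x$; this is exactly the Feynman-formula structure already appearing in Proposition \ref{prop:BGS}. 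Inserting this into $\mathcal{F}_0(t)\ffi=\int_{0+}^\infty[F(s/m(t))]^{m(t)}\ffi\,\eta^0_t(ds)$ and composing with $e^{-\sigma t}F(\lambda t)$ produces the explicit expression for $\mathcal{F}(t)$ stated in part (i). The analogous substitution into \eqref{eq:mathcal(F)mu} gives the claimed expression for $\mathcal{F}_\mu(t)$ in part (ii).

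Finally, the Feynman formula $T^f_t\ffi=\lim_{n\to\infty}[\mathcal{F}(t/n)]^n\ffi$ (resp.\ with $\mathcal{F}_\mu$) is obtained by iterating the above one-step formula $n$ times; each of the $n$ factors introduces an independent integration over an additional $s_j\sim \eta^0_{t/n}(ds_j)$ (resp.\ a factor built from $\mu$) and a block of $N-1=m(t/n)$ intermediate spatial integrations plus one more coming from the factor $F(\lambda t/n)$. The only genuine difficulty is the indexing bookkeeping: one must choose a consistent labelling of the $nN$ space points $q_1,\dots,q_{1+nN}$ and the $n$ subordinator variables $s_j$ so that the product of Gaussian kernels $p_A$, the drift corrections, the killing factor $e^{-\sigma t}$, and the mixed $C$-terms line up with the statement. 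Once the indexing matches the labelling used in the theorem, both Feynman formulas follow directly from the Chernoff approximation just verified. I expect this indexing step to be the only real obstacle; no new analytic ingredient beyond what is supplied by Theorems \ref{thm:ChFam_knownSub-r}, \ref{thm:ChFam_known bdd Levy} and Proposition \ref{prop:BGS} is required.
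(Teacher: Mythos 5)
Your proposal is correct and follows exactly the route the paper takes: the paper offers no separate argument for this theorem, asserting that it "follows immediately from Theorem \ref{thm:ChFam_knownSub-r}, Theorem \ref{thm:ChFam_known bdd Levy} and Proposition \ref{prop:BGS}," which is precisely your plan of verifying the hypotheses (contractivity, strong continuity, the core $D=C^{2,\alpha}_c(\cR^d)$ and the derivative condition) and then substituting the explicit kernel \eqref{eq:F(t) BGS} into \eqref{eq:mathcal(F)}--\eqref{eq:F_0} and \eqref{eq:mathcal(F)mu} with the attendant indexing bookkeeping. Your write-up is, if anything, more explicit than the paper about why the hypotheses of the general theorems are met.
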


\subsection{Approximation of subordinate   diffusions on a star graph }

Consider a star graph $\Gamma$ with vertex $v$ and $d\in \mathbb{N}$ external edges $l_1,\ldots, l_d$. Let  $\rho$ be the metric on $\Gamma$  induced by the isomorphism $l_k\cong[0,+\infty)$;  $\Gamma^o=\Gamma\setminus\{ v\}=\sqcup_{k=1}^d l^o_k$, $l^o_k\cong(0,+\infty)$. For each point $\xi\in\Gamma$ one has   $\xi\in l^o_k\Rightarrow \xi=(k,x)$, where  $x=\rho(\xi,v)>0$. For each function $\ffi:\Gamma\to\cR$ define  $\varphi_k(x):=\varphi(\xi)\big|_{\xi\in l^o_k}$ and   $\intl_{\Gamma}\varphi(\xi)d\xi:=\suml_{k=1}^d\intl_0^\infty\varphi_k(x)dx$.
Let $C_\infty(\Gamma)$ be the Banach space of continuous functions on $\Gamma$ vanishing at infinity equipped with the sup-norm $\|\cdot\|_\infty$. Let $C^2_\infty(\Gamma)=\{\varphi\in  C_\infty(\Gamma):\, \varphi\in C^2_\infty(\Gamma^o),\varphi''\,\text{extends to}\, \Gamma \,\text{as a function in}\, C_\infty(\Gamma)\}$. Let $\delta_v$ be the Dirac delta-measure concentrated at the vertex $v$.  Let $\rho_v(\xi,\zeta):=\rho(\xi,v)+\rho(v,\zeta)$ for  all $\xi,\zeta\in \Gamma$.
Let $1_{k}(\xi)=1$ if $\xi\in l^o_k$, $1_{k}(\xi)=0$ if $\xi\notin l^o_k$.
 Let $g(t,z)=(2\pi t)^{-1/2}\exp\big\{\frac{-z^2}{2t}\big\}$.
Define
$
 p(t,\xi,\zeta)=\suml_{k=1}^d1_k(\xi)1_k(\zeta)g(t,\rho(\xi,\zeta))$,
 $p_v(t,\xi,\zeta)=\suml_{k=1}^d1_k(\xi)1_k(\zeta)g(t,\rho_v(\xi,\zeta))$ and
 $p^D(t,\xi,\zeta)=p(t,\xi,\zeta)-p_v(t,\xi,\zeta)$.
 Let $a$, $c$, $b_k\in[0,1]$, $k=1,\ldots,d$, $a\neq 1$ and
$
a+c+\suml_{k=1}^db_k=1.
$
Consider an operator $L_0$ on  $C_\infty(\Gamma)$  with $\D(L_0)=\{ \varphi\in C^2_\infty(\Gamma):\, a\varphi(v)+\frac{c}{2}\varphi''(v)=\suml_{k=1}^d b_k\varphi'_k(v)  \}$ and $L_0\varphi=\frac12\varphi''$ for all $\varphi\in\D(L_0)$. Due to results of V. Kostrykin, Ju. Potthoff and R. Schrader \cite{MR2927703} the following statement is true.
\begin{proposition}\label{prop:Graph}
The operator  $(L_0,\D(L_0))$ is the generator of a strongly continuous semigroup  $(T^0_t)_{t\geq 0}$ on the space $C_\infty(\Gamma)$ and for each $\varphi\in C_\infty(\Gamma)$ one has $
T^0_t\varphi(\xi)=\intl_{\Gamma}\varphi(\zeta)P(t,\xi,d\zeta)
$, where the transition kernel $P(t,\xi,d\zeta)$  is given explicitly  by the following formulae:

\noindent for the case $ a+c\in(0,1)$ with $\, w_k=\frac{b_k}{1-a-c}\,$, $\,\beta=\frac{a}{1-a-c}\,$, $\,\gamma=\frac{c}{1-a-c} \,$ and
$$
g_{\beta,\gamma}(t,z)=\frac{1}{\gamma^2}(2\pi t)^{-1/2}\intl_0^t\frac{s+\gamma z}{(t-s)^{3/2}}    \exp\bigg\{-\frac{(s+\gamma z)^2}{2\gamma^2(t-s)}\bigg\}e^{-\beta s/\gamma}  ds,
$$
\begin{align}\label{general P}
&P(t,\xi,d\zeta)=\\
&
=p^D(t,\xi,\zeta)d\zeta+\suml_{k,j=1}^d 1_k(\xi)1_j(\zeta)2w_jg_{\beta,\gamma}(t, \rho_v(\xi,\zeta))d\zeta+\gamma g_{\beta,\gamma}(t,\rho(\xi,v))\delta_v(d\zeta);\nonumber
\end{align}
\noindent for the case  $a+c=0$ with $w_k=b_k$
\begin{align}
&P(t,\xi,\zeta)=p^D(t,\xi,\zeta)d\zeta+\suml_{k,j=1}^d 1_k(\xi)1_j(\zeta)2w_jg(t, \rho_v(\xi,\zeta))d\zeta;
\end{align}
\noindent for the case  $a+c=1$ with $a=\frac{\beta}{1+\beta},\quad c=\frac{1}{1+\beta}$
\begin{align}
&P(t,\xi,\zeta)=p^D(t,\xi,\zeta)d\zeta-\left(\intl_0^t e^{-\beta(t-s)}\frac{\rho(\xi,v)}{\sqrt{2\pi s^3}}\exp\left\{ -\frac{\rho(\xi,v)^2}{2s} \right\}ds,   \right)\delta_v(d\zeta).
\end{align}
\end{proposition}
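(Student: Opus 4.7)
My plan is to approach Proposition \ref{prop:Graph} in two phases: first establish that $(L_0, \D(L_0))$ generates a strongly continuous contraction (Feller) semigroup on $C_\infty(\Gamma)$, and then identify its transition kernel with the explicit formula in the statement.

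For generation I would apply the Hille--Yosida theorem in its Feller-generator form (equivalently, the Lumer--Phillips theorem combined with the positive maximum principle). One first checks density of $\D(L_0)$ in $C_\infty(\Gamma)$; this is elementary since one can approximate by $C^2$ functions that are compactly supported away from $v$ and then locally adjusted near $v$ to realize the affine boundary relation, using $a \neq 1$. Dissipativity comes from the positive maximum principle: at an interior maximum on some $l_k^o$ the standard second-derivative test gives $\varphi''(\xi_0) \le 0$, while at a nonnegative maximum located at $v$ each outward derivative $\varphi'_k(v)$ is nonpositive, so $\sum_k b_k \varphi'_k(v) \le 0$, and the vertex relation together with $a, c \ge 0$ and $\varphi(v) \ge 0$ forces $\varphi''(v) \le 0$ (with an appropriate case analysis in the degenerate cases $c = 0$ or $a + c = 1$). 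The analytic core is the range condition: solve $\lambda u - \frac{1}{2} u'' = f$ on each half-edge among bounded solutions of the form $u_k(x) = u_k^{\mathrm{part}}(x) + A_k e^{-\sqrt{2\lambda}\, x}$, where $u_k^{\mathrm{part}}$ is built from the free half-line Green's function $(2\lambda)^{-1/2} e^{-\sqrt{2\lambda}\,|x-y|}$. The constants $A_1, \ldots, A_d$ are uniquely determined by imposing continuity of $u$ at $v$ (which gives $d-1$ equations) and the Wentzell-type vertex relation (which gives one more), and explicit linear algebra using $a + c + \sum_k b_k = 1$ and $a \neq 1$ shows the resulting $d \times d$ system is invertible with a bounded inverse in $\lambda$.

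Given generation, I would obtain the explicit transition kernel by comparing Laplace transforms. The Laplace transform in $t$ of the claimed kernel $P(t,\xi,d\zeta)$ must equal the resolvent kernel produced in the previous step. Decomposing $P$ in the natural way: the Dirichlet part $p^D(t,\xi,\zeta) = p(t,\xi,\zeta) - p_v(t,\xi,\zeta)$ is the classical half-line heat kernel with absorption at $0$ on the edge containing $\xi$, and represents the contribution of Brownian paths that have not reached $v$ by time $t$. The terms involving $g_{\beta,\gamma}$ and the atomic mass at $v$ encode the boundary behaviour after the first hit of $v$: in Laplace variables they match the coefficients $A_k$ from the resolvent step, with the denominator $\beta + \gamma \sqrt{2\lambda}$ (or its degeneration when $a + c = 0$ or $a + c = 1$) producing, under inverse Laplace transform, the integral representation defining $g_{\beta,\gamma}$. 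Uniqueness of Laplace inversion then identifies $T^0_t \varphi(\xi) = \int_\Gamma \varphi(\zeta)\, P(t,\xi,d\zeta)$; continuity in $\xi$, the $C_\infty(\Gamma)$-preservation and strong continuity as $t \to 0$ then follow from the explicit kernel formulas together with standard dominated-convergence arguments.

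The main obstacle is the bookkeeping of the resolvent computation at $v$ together with the resulting case split $a+c \in (0,1)$, $a+c = 0$, $a+c = 1$: the $d \times d$ vertex matrix must be inverted carefully, and the two degenerate regimes correspond to limits in which one of $\beta = a/(1-a-c)$ or $\gamma = c/(1-a-c)$ becomes infinite, producing qualitatively different structural contributions (the vanishing of the $\delta_v$-mass in the purely jump/reflection case, the disappearance of the $g_{\beta,\gamma}$-corrections in the purely absorbing case). Once this algebra is executed correctly, Chapman--Kolmogorov for the explicit kernel follows automatically from the abstract semigroup law for $T^0_t$, avoiding any direct convolution calculation of the kernels.
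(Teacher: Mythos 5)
The paper does not actually prove Proposition \ref{prop:Graph}: it is imported from Kostrykin, Potthoff and Schrader \cite{MR2927703} (``Due to results of \dots the following statement is true''), so there is no internal argument to compare against. Your outline is, in substance, a reconstruction of how that reference proceeds: Feller generation via density, the positive maximum principle and an explicit resolvent computation on the half-edges, followed by identification of the transition kernel through Laplace inversion of the resolvent. As a strategy it is the right one, and it is essentially the only one available for these Wentzell vertex conditions.

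Two places need more care than the sketch admits. First, in the dissipativity step the claim that the vertex relation together with $a,c\ge 0$ and $\varphi(v)\ge 0$ ``forces $\varphi''(v)\le 0$'' fails as stated when $c=0$: the relation then carries no information about $\varphi''(v)$ at all. One must instead observe that a positive maximum at $v$ is incompatible with the relation when $a>0$ (since $\sum_k b_k\varphi_k'(v)\le 0$ would force $a\varphi(v)\le 0$), while for $a=c=0$ the identity $\sum_k b_k\varphi_k'(v)=0$ with every $\varphi_k'(v)\le 0$ forces $\varphi_k'(v)=0$ on each edge with $b_k>0$, and only then does the second-order Taylor test give $\varphi''(v)\le 0$. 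Second, in the resolvent step the Wentzell term $\tfrac{c}{2}u''(v)$ must be eliminated using the equation itself, $\tfrac{c}{2}u''(v)=c\lambda u(v)-c f(v)$; after normalising by $1-a-c$ the vertex determinant is therefore of the form $\beta+\gamma\lambda+\sqrt{2\lambda}$, not $\beta+\gamma\sqrt{2\lambda}$ as written, and the inhomogeneous contribution $-c f(v)$ is precisely what produces the atom $\gamma\, g_{\beta,\gamma}(t,\rho(\xi,v))\,\delta_v(d\zeta)$ in \eqref{general P}. A kernel identification that drops this term will reproduce the absolutely continuous part of $P(t,\xi,d\zeta)$ but miss the point mass at the vertex, i.e.\ the stickiness. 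With those corrections the plan is viable, though carrying it out in full amounts to redoing the computation of \cite{MR2927703} rather than citing it, which is all the paper itself does.
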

The heat kernel  $P(t,\xi,d\zeta)$ in \eqref{general P} is the transition kernel of the process of Brownian motion on $\Gamma$ constructed   by killing (after an exponential holding  time  with the rate $\beta$ at the  vertex) the Walsh process (the analogue of the reflected Brownian motion)  with sticky vertex with stickness parameter $\gamma$ (see \cite{MR2927703} for the detailed exposition).

Let now $A(\cdot)\in C(\Gamma)$ and there exist   $ \underline{\alpha}, \overline{\alpha}\in(0,+\infty)$  such that $\underline{\alpha}\le A(\xi)\le\overline{\alpha}$ for all $\xi\in \Gamma$.  Let $B(\cdot)\in C_b(\Gamma)$ with $B(v)=0$. 
Let $C(\cdot)\in C_b(\Gamma)$ and $C(\xi)\geq0$ for all $\xi\in\Gamma$. As before let $a$, $c$, $b_k\in[0,1]$, $k=1,\ldots,d$ with $a\neq 1$ and
$
a+c+\suml_{k=1}^db_k=1.
$
As in \cite{Yana_QMATH12} consider an operator $L$ such that for all  $\varphi\in\D(L):=\big\{ \varphi\in C^2_\infty(\Gamma):\, a\varphi(v)+\frac{c}{2}\varphi''(v)=\sum_{k=1}^d b_k\varphi'_k(v) \big\}$  one has
$$
 L\varphi(\xi):=A(\xi)\varphi''(\xi)+B(\xi)\varphi'(\xi)-C(\xi)\varphi(\xi).
$$
Then the  operator  $(L,\D(L))$ is the generator of a  strongly continuous contraction semigroup $(T_t)_{t\geq 0}$ on the space $C_\infty(\Gamma)$. Let now the semigroup $(T^f_t)_{t\geq 0}$ be subordinate   to the semigroup $(T_t)_{t\geq 0}$ with respect to a given convolution semigroup $(\eta_t)_{t\geq 0}$ associated to a Bernstein function $f$ defined by a triplet $(\sigma,\lambda,\mu)$. The statement below follows immediately from Proposition \ref{prop:Graph} and Theorems  \ref{TH5.1}, \ref{thm:Technomag mult pert},  \ref{thm:ChFam_knownSub-r} and   \ref{thm:ChFam_known bdd Levy}.

\begin{theorem}
(i) Under notations of Proposition \ref{prop:Graph} consider a family $(\widetilde{F}(t))_{t\geq 0}$ on $C_\infty(\Gamma)$ defined by
$$
\widetilde{F}(t)\varphi(\xi):= \intl_{\Gamma}\varphi(\zeta)P(A(\xi)t,\xi,d\zeta).
$$
Consider  a family $(S_t)_{t\geq 0}$  on $C_\infty(\Gamma)$ defined by
$$
S_t\varphi(\xi):= \varphi(\xi+tB(\xi)):=\left\{
\begin{array}{lll}
\varphi_k(x+tB_k(x)),&\xi=(k,x),& x+tB_k(x)>0,\\
\varphi(v),&\xi=(k,x),& x+tB_k(x)\le0,\\
\varphi(v),&\xi=v.&
\end{array}
\right.
$$
  Consider  a family $(e^{-tC})_{t\geq 0}$ with $(e^{-tC}\varphi)(\xi):=e^{-tC(\xi)}\varphi(\xi)$.  By  Proposition~\ref{prop:Graph},  Theorem~\ref{TH5.1}  and Theorem~\ref{thm:Technomag mult pert}  the family $(F(t))_{t\geq 0}$ with  $F(t):=e^{-tC}\circ S_t\circ \widetilde{F}(t)$, i.e.
\begin{equation}\label{eq:F(t) Graph}
  { F(t)\varphi(\xi)=e^{-tC(\xi)}\intl_{\Gamma}\varphi(\zeta)P(A(\xi+tB(\xi))t,\xi+tB(\xi),d\zeta)},
\end{equation}
is Chernoff equivalent to the semigroup $(T_t)_{t\geq 0}$ on the space $C_\infty(\Gamma)$ generated by $(L,\D(L))$. 
  
  \noindent (ii)  Under assumptions and notations of Theorem \ref{thm:ChFam_knownSub-r} the family $(\mathcal{F}(t))_{t\geq0}$, constructed with the help of $(F(t))_{t\geq 0}$ given by \eqref{eq:F(t) Graph},  is Chernoff equivalent to the semigroup $(T^f_t)_{t\geq 0}$ and has the following explicit view: $\mathcal{F}(0):=\id$ and 
\begin{align*}
&\mathcal{F}(t)\ffi(\xi):=e^{-\sigma t}\intl_{0+}^\infty\intl_{\Gamma^{m(t)+1}}\exp\left(-\lambda t C(\xi)-\frac{s}{m(t)}\suml_{k=1}^{m(t)}C(\xi_{k+1})   \right)\ffi(\xi_1)\times\\
&
\times \prodl_{k=1}^{m(t)}P\big(A(\xi_{k+1}+(s/m(t))B(\xi_{k+1}))s/m(t),\xi_{k+1}+(s/m(t))B(\xi_{k+1}),d\xi_k\big)\times\\
&
\times P\big(A(\xi+t\lambda B(\xi))t\lambda,\xi+t\lambda B(\xi),d\xi_{m(t)+1}\big)\, d\eta^0_t(ds).
\end{align*}  
Therefore, 
for each $\ffi\in C_\infty(\Gamma)$ the following Feynman formula is true  with  $N:=1+m(t/n)$ and $\xi_{1+nN}:=\xi$  
 \begin{align*}
&T^f_t\ffi(\xi)=\\
&
=\liml_{n\to\infty}e^{-\sigma t}\,\intl_{(0,\infty)^n}\,\intl_{\Gamma^{nN}}\exp\bigg(-\suml_{j=1}^n\bigg[\frac{\lambda t}{n} C(\xi_{1+jN})-\frac{s_j}{N-1}\suml_{k=1}^{N-1}C(\xi_{k+1+(j-1)N})  \bigg] \bigg)\times\\
&
\times \ffi(\xi_1)\prodl_{j=1}^n\Bigg[P\big(A(\xi_{1+jN}+(t\lambda/n) B(\xi_{1+jN}))t\lambda/n,\xi_{1+jN}+(t\lambda/n)B(\xi_{1+jN}),d\xi_{jN}\big)\times\\
&
 \times\prodl_{k=1}^{N-1}P\bigg(A\big(\xi_{1+k+(j-1)N}+s_jB(\xi_{1+k+(j-1)N})/(N-1)\big)s_j/(N-1),\\
 &
 \quad\quad\quad\quad\quad\xi_{1+k+(j-1)N}+s_jB(\xi_{1+k+(j-1)N})/(N-1),\,d\xi_{k+(j-1)N}\bigg) \, d\eta^0_{t/n}(ds_j)\Bigg].
\end{align*}

  \noindent (iii)  Under assumptions and notations of Theorem \ref{thm:ChFam_known bdd Levy} the family  $(\mathcal{F}_\mu(t))_{t\geq0}$, constructed with the help of $(F(t))_{t\geq 0}$ given by \eqref{eq:F(t) Graph},  is Chernoff equivalent to the semigroup $(T^f_t)_{t\geq 0}$ and has the following explicit view: $\mathcal{F}_\mu(0):=\id$ and
  \begin{align*}
&\mathcal{F}_\mu(t)\ffi(\xi):=\\
&
=e^{-\sigma t-\lambda t C(\xi)}
\intl_\Gamma\Bigg[\ffi(\xi_{m(t)+1})+t\intl_{0+}^\infty
\Bigg(\,\,\intl_{\Gamma^{m(t)}}\exp\bigg(-\frac{s}{m(t)}\suml_{k=1}^{m(t)}C(\xi_{k+1})   \bigg)\ffi(\xi_1)\times\\
&
\times \prodl_{k=1}^{m(t)}P\big(A(\xi_{k+1}+(s/m(t))B(\xi_{k+1}))s/m(t),\xi_{k+1}+(s/m(t))B(\xi_{k+1}),d\xi_k\big)-\\
&
-\ffi(\xi_{m(t)+1})\Bigg)\mu(ds)\Bigg] P\big(A(\xi+t\lambda B(\xi))t\lambda,\xi+t\lambda B(\xi),d\xi_{m(t)+1}\big).
\end{align*} 
\end{theorem}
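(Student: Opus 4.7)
The three parts are proved in sequence: part (i) constructs the Chernoff-equivalent family $(F(t))_{t\geq 0}$ for $(T_t)_{t\geq 0}$, and parts (ii) and (iii) then apply Theorems \ref{thm:ChFam_knownSub-r} and \ref{thm:ChFam_known bdd Levy} respectively to pass to the subordinate semigroup $(T^f_t)_{t\geq 0}$.

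For part (i), I would decompose $L$ on $\D(L)$ into three summands: the diffusion $\widetilde L\varphi(\xi):=A(\xi)L_0\varphi(\xi)$ (with a suitable constant absorbed into $A$ so that the rescaled $L_0$ is the generator from Proposition \ref{prop:Graph}), the drift $L_B\varphi:=B\varphi'$, and the killing $L_C\varphi:=-C\varphi$. Proposition \ref{prop:Graph} identifies $(T^0_t)_{t\geq 0}$, which is trivially Chernoff equivalent to itself; applying Theorem \ref{thm:Technomag mult pert} with the strictly positive bounded continuous factor $A(\cdot)$ yields $\widetilde F(t)\varphi(\xi)=\int_\Gamma\varphi(\zeta)P(A(\xi)t,\xi,d\zeta)$ as a Chernoff-equivalent family for $\widetilde L$. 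The shift $S_t$ is contractive on $C_\infty(\Gamma)$ (sup-norms are preserved by pullback, with the vertex acting as a trap when the orbit would leave an edge) and has $B\cdot\nabla$ as its derivative at zero on smooth functions, while $(e^{-tC})_{t\geq 0}$ is itself the multiplication semigroup with generator $-C$. Theorem \ref{TH5.1} then glues the three families into $F(t):=e^{-tC}\circ S_t\circ\widetilde F(t)$ of \eqref{eq:F(t) Graph}, Chernoff equivalent to $(T_t)_{t\geq 0}$.

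Parts (ii) and (iii) are then essentially formal. Plugging the family of \eqref{eq:F(t) Graph} into \eqref{eq:mathcal(F)} and \eqref{eq:mathcal(F)mu} and unfolding the $m(t)$-fold composition $[F(s/m(t))]^{m(t)}\varphi$ as an iterated integral against $m(t)$ copies of the kernel $P$ produces the displayed explicit forms of $\mathcal F(t)$ and $\mathcal F_\mu(t)$. The Feynman formula of (ii) comes from iterating $[\mathcal F(t/n)]^n$ in $n$ steps, each contributing a separate integration in $s_j$ with respect to $\eta^0_{t/n}$ together with $N-1=m(t/n)$ new spatial integrations; a routine bookkeeping on the indices reindexes the composed kernels into the product form stated in the theorem.

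The main technical obstacle is the joint-core hypothesis of Theorem \ref{TH5.1} at the vertex, where $\D(L)$ enforces the Wentzell-type gluing $a\varphi(v)+\tfrac{c}{2}\varphi''(v)=\sum_k b_k\varphi_k'(v)$. The shift $S_t$ respects this condition to first order only because of the assumption $B(v)=0$, and the Taylor expansion of $\widetilde F(t)\varphi$ near $v$ must reproduce the correct diffusion generator on $\D(L)$ in spite of the singular sticky and killing contributions encoded in $g_{\beta,\gamma}$ and the Dirac atom of Proposition \ref{prop:Graph}; this reduces to checking that the mass of $P(A(\xi)t,\xi,\cdot)$ absorbed into the vertex and the atomic part at $v$ balance correctly against $a\varphi(v)$ and $\sum_k b_k\varphi_k'(v)$ as $t\downarrow 0$. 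Once this is in place, parts (ii) and (iii) follow immediately by the cited theorems with no additional estimates required.
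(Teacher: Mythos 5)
Your proposal follows essentially the same route as the paper: the paper offers no separate argument for this theorem beyond the assertion that it "follows immediately from Proposition~\ref{prop:Graph} and Theorems~\ref{TH5.1}, \ref{thm:Technomag mult pert}, \ref{thm:ChFam_knownSub-r} and \ref{thm:ChFam_known bdd Levy}," which is exactly the decomposition-and-composition scheme you describe (including absorbing the factor $\tfrac12$ into $A$ and unfolding $[F(s/m(t))]^{m(t)}$ into iterated kernels). Your additional remarks on the core condition at the vertex and the role of $B(v)=0$ only make explicit what the paper leaves implicit, so the two arguments coincide.
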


\subsection{Approximation of subordinate   diffusions in a Riemannian manifold}\label{subsec:Riemann}
 Let $\Gamma$ be a compact connected Riemannian manifold of class $C^\infty$  without boundary, $\dim \Gamma=d$.
 Let $\rho_\Gamma$  be the distance in $\Gamma$ generated by the Riemannian metric of $\Gamma$.  Let $\vol_\Gamma$  be the corresponding Riemannian volume measure on $\Gamma$. Assume also  that $\Gamma$ is isometrically embedded into a Riemannian manifold $G$ of dimension $\kappa$ and into  the Euclidean space $\cR^D$.  Let $\Phi$ be a  $C^\infty$-smooth isometric embedding of $\Gamma$ into $\cR^D$ and $\Phi_G$ be a  $C^\infty$-smooth isometric embedding of $\Gamma$ into $G$. Let $\rho_G$ be the distance in $G$ generated by the Riemannian metric of $G$. Consider the Laplace--Beltrami operator $\Delta_\Gamma:=-\tr\nabla_\Gamma^2$, where $\nabla_\Gamma$ is the Levi-Civita connection on $\Gamma$. The closure of $(\Delta_\Gamma, C^3(\Gamma))$ generates the \emph{heat semigroup}, i.e. the strongly continuous contraction  semigroup $(e^{\frac{t}{2}\Delta_\Gamma})_{t\geq0}$ on the space $C(\Gamma)$. Due to results of O.G. Smolyanov, H. v. Weizs\"{a}cker and O. Wittich (see \cite{MR2276523}, Sect. 5) the following is true.
 \begin{proposition}\label{prop:SWW}
 For all $t>0$, $x,y\in \Gamma$ consider pseudo-Gaussian kernels
 \begin{align*}
 & K_1(t,x,y):=(2\pi t)^{-d/2}\exp\left(- \frac{\rho_\Gamma(x,y)^2}{2t}   \right), \\
 &
 K_2(t,x,y):=(2\pi t)^{-d/2}\exp\left( -\frac{\rho_G(\Phi_G(x),\Phi_G(y))^2}{2t}   \right),\\
 &
 K_3(t,x,y):=(2\pi t)^{-d/2}\exp\left(- \frac{|\Phi(x)-\Phi(y)|^2}{2t}   \right).
 \end{align*}
 For each kernel $K_i$, $i=1,2,3$, define the family   $(F_i(t))_{t\geq0}$, $i=1,2,3$, of  contractions on $C(\Gamma)$ by
 \begin{align*}
& F_i(0):=\id \quad\text{ and for each }\quad \ffi\in C(\Gamma)\quad\text{ and each }\quad t>0\\
 &
 F_i(t)\ffi(x):=\frac{\int_\Gamma K_i(t,x,y)\ffi(y)\vol_\Gamma(dy)}{\int_\Gamma K_i(t,x,y)\vol_\Gamma(dy)}.
 \end{align*}
 Then each  family $(F_i(t))_{t\geq0}$, $i=1,2,3$, is Chernoff equivalent to the heat semigroup $(e^{\frac{t}{2}\Delta_\Gamma})_{t\geq0}$ on the space $C(\Gamma)$ with $\lim_{t\to0}\big\|
\frac{F_i(t)\ffi-\ffi}{t}-\frac12\Delta_\Gamma\ffi\big\|_\infty=0$ for all $\ffi\in C^3(\Gamma)$.
 \end{proposition}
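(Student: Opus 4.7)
The plan is to verify the hypotheses of the Chernoff theorem (Theorem~\ref{thm:Chernoff}) for each of the three families $(F_i(t))_{t\geq 0}$, $i=1,2,3$. The easy hypotheses are essentially free: $F_i(0)=\id$ holds by definition, and each $F_i(t)$ is a contraction on $C(\Gamma)$ because $F_i(t)\ffi(x)$ is an average of $\ffi$ against the probability measure $K_i(t,x,\cdot)\vol_\Gamma(d\cdot)/\int_\Gamma K_i(t,x,\cdot)\vol_\Gamma$. Strong continuity on $(0,\infty)$ follows from smoothness of $K_i$ in $t$ together with compactness of $\Gamma$; strong continuity at $t=0$ comes from the concentration of the Gaussian kernel to a Dirac mass, uniformly in $x\in\Gamma$ by compactness. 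Since the paper already asserts that $C^3(\Gamma)$ is a core for the closure of $\tfrac12\Delta_\Gamma$, it remains to establish
\[
\lim_{t\to 0}\Bigl\|\frac{F_i(t)\ffi-\ffi}{t}-\tfrac12\Delta_\Gamma\ffi\Bigr\|_\infty=0\qquad\text{for all }\ffi\in C^3(\Gamma),\;i=1,2,3.
\]

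The heart of the argument is a short-time Laplace-type asymptotic analysis. Fix $x\in\Gamma$ and work in Riemannian normal coordinates $y=\exp_x v$ at $x$, valid on a ball whose radius is bounded below uniformly in $x$ by the injectivity radius. There $\rho_\Gamma(x,y)=|v|$, the volume density expands as $\sqrt{\det g(v)}=1-\tfrac16\mathrm{Ric}_{ij}(x)v^iv^j+O(|v|^3)$, and $\ffi(\exp_x v)=\ffi(x)+\partial_i\ffi(x)v^i+\tfrac12\partial_{ij}\ffi(x)v^iv^j+O(|v|^3)$. Contributions from the complement of a fixed small ball are exponentially small as $t\to 0$, so the integrals may be localized to a single normal chart. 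Using the Gaussian moments $\int v^iv^j g_t(v)\,dv=t\,\delta_{ij}$ and $\int|v|^4 g_t(v)\,dv=O(t^2)$, the case $i=1$ gives
\begin{align*}
\int_\Gamma K_1(t,x,y)\ffi(y)\vol_\Gamma(dy)&=\ffi(x)+\tfrac{t}{2}\Delta_0\ffi(x)-\tfrac{t\,\scal(x)}{6}\ffi(x)+O(t^{3/2}),\\
\int_\Gamma K_1(t,x,y)\vol_\Gamma(dy)&=1-\tfrac{t\,\scal(x)}{6}+O(t^{3/2}),
\end{align*}
where $\Delta_0=\sum_i\partial_i^2$ is the Euclidean Laplacian in the chosen normal coordinates. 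Taking the ratio cancels the scalar-curvature terms and yields $F_1(t)\ffi(x)=\ffi(x)+\tfrac{t}{2}\Delta_0\ffi(x)+O(t^{3/2})$; since $\Delta_0$ at the origin of normal coordinates agrees with the Laplace--Beltrami operator acting on $\ffi$ at $x$, the desired derivative $\tfrac12\Delta_\Gamma\ffi(x)$ is produced.

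For $K_2$ and $K_3$ the plan is to reduce to $K_1$ via the classical expansions for isometric embeddings,
\[
\rho_G\bigl(\Phi_G(x),\Phi_G(y)\bigr)^2=\rho_\Gamma(x,y)^2+O\bigl(\rho_\Gamma(x,y)^4\bigr),\qquad |\Phi(x)-\Phi(y)|^2=\rho_\Gamma(x,y)^2+O\bigl(\rho_\Gamma(x,y)^4\bigr),
\]
whose $O(\rho^4)$ remainders are governed by the second fundamental forms of $\Phi_G$ and $\Phi$, respectively. On a neighborhood of the diagonal this gives
\[
K_i(t,x,y)=K_1(t,x,y)\exp\!\Bigl(\tfrac{O(\rho_\Gamma^4)}{t}\Bigr)=K_1(t,x,y)\bigl(1+O(\rho_\Gamma^4/t)\bigr),\qquad i=2,3.
\]
Since $\rho_\Gamma^4/t$ integrated against a normalized Gaussian of width $\sqrt{t}$ is $O(t)$, this perturbation contributes $O(t)$ to both numerator and denominator and therefore drops out of $(F_i(t)\ffi-\ffi)/t$ in the limit; hence all three families have the same derivative at zero.

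The main obstacle is to make every estimate uniform in $x\in\Gamma$: the localization radius, the error terms in the normal-coordinate expansions, the geometric comparison constants, and the suppression of tail contributions by the Gaussian all require control independent of the base point. Compactness of $\Gamma$ supplies exactly this: the injectivity radius is bounded below, and $\mathrm{Ric}$, $\scal$, the second fundamental forms of both embeddings, and the $C^3$-norm of $\ffi$ are uniformly bounded. These uniform bounds promote the pointwise asymptotics to convergence in sup-norm, completing the verification of the hypotheses of Theorem~\ref{thm:Chernoff}.
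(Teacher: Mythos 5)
First, a point of context: the paper itself offers no proof of Proposition \ref{prop:SWW} --- it is imported from Smolyanov, von Weizs\"acker and Wittich (\cite{MR2276523}, Sect.~5). Your strategy (verify the Chernoff hypotheses directly; obtain the derivative at $t=0$ from a short-time Laplace expansion in normal coordinates, with the normalization cancelling the scalar-curvature term coming from the volume density; reduce $K_2,K_3$ to $K_1$ via the $O(\rho_\Gamma^4)$ comparison of intrinsic, ambient and chordal distances) is precisely the method of that cited source, so you are reconstructing the intended proof rather than proposing an alternative. The $K_1$ computation and the uniformity-via-compactness discussion are correct as you state them.

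There is, however, one step whose justification fails as written: the claim that, for $i=2,3$, the perturbation ``contributes $O(t)$ to both numerator and denominator and therefore drops out of $(F_i(t)\ffi-\ffi)/t$ in the limit''. That inference is not valid. If $N_1=\ffi(x)+\tfrac t2\Delta_0\ffi(x)+o(t)$ and $D_1=1+O(t)$, then adding terms $E_N,E_D=O(t)$ to numerator and denominator changes the ratio by $E_N-\ffi(x)E_D+O(t^2)$, which is generically of exact order $t$ and hence survives division by $t$; the scalar-curvature term cancels because it enters as $-\tfrac{t}{6}\scal(x)\,\ffi(x)$ in $N_1$ and $-\tfrac{t}{6}\scal(x)$ in $D_1$, i.e.\ because $E_N=\ffi(x)E_D$ to leading order, not merely because both are $O(t)$. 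The same structural fact is what rescues $K_2,K_3$: one has $K_i=K_1\exp\bigl(q_x(v)/t+O(|v|^5)/t\bigr)$ with $q_x$ a quartic form built from the second fundamental form, whence $E_N=\tfrac1t\int K_1\,q_x(v)\,\ffi(\exp_xv)\,dv+o(t)$ and $E_D=\tfrac1t\int K_1\,q_x(v)\,dv+o(t)$; writing $\ffi(\exp_xv)=\ffi(x)+O(|v|)$ gives $E_N-\ffi(x)E_D=\tfrac1t\int K_1\,q_x(v)\,O(|v|)\,dv=O(t^{3/2})=o(t)$, and only then does the correction drop out of the normalized mean. Two further small repairs are needed to make this rigorous: the linearization $e^{q_x(v)/t}=1+q_x(v)/t+\dots$ is only usable where $q_x(v)/t$ is bounded, so on the rest of the localization ball you must choose the radius $r_0$ (uniformly in $x$, by compactness) so small that $|q_x(v)|\le\tfrac14|v|^2$ for $|v|\le r_0$, ensuring $K_1e^{q_x(v)/t}\le(2\pi t)^{-d/2}e^{-|v|^2/4t}$ and keeping the error Gaussian-integrable; and for $K_3$ the exponential smallness of the contribution from $\rho_\Gamma(x,y)\ge r_0$ requires the lower bound $\inf\{|\Phi(x)-\Phi(y)|:\rho_\Gamma(x,y)\ge r_0\}>0$, which holds because $\Phi$ is an embedding of a compact manifold. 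With these points supplied the argument is complete.
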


Let now $A(\cdot)\in C_b(\Gamma)$ be a strictly positive  function, $B(\cdot):\Gamma\to T\Gamma$ be a bounded vector field of class $C^1(\Gamma)$ and $C(\cdot)\in C_b(\Gamma)$ be a nonnegative function.  Denote the inner product of vectors $u(x)$ and $v(x)$  in the tangent space $T_x\Gamma$ as $u(x)\cdot v(x)$.  As in \cite{MR2423533} define the family $(S_t)_{t\geq0}$ on $C(\Gamma)$ by
\begin{align*}
S_t\ffi(x):=\ffi(\gamma^x(t)),
\end{align*}
where $\gamma^x(\cdot)$ is a geodesic with starting point $x$  (i.e. $\gamma^x(0)=x$) and the direction vector $B(x)$ (i.e. $\dot{\gamma}^x(0)=B(x)$). Since the manifold is smooth and compact, and the vector field $B$ is smooth, the family $(S_t)_{t\geq0}$ is well defined as a family of strongly continuous contractions on $C(\Gamma)$ and $\lim_{t\to0}\left\|t^{-1}(S_t\ffi-\ffi)-B\cdot\nabla_\Gamma\ffi\right\|_\infty=0$ for all $\ffi\in C^3(\Gamma)$. Further, consider the operator $L$ defined on the set $C^3(\Gamma)$ by
\begin{align*}
L\ffi(x):=\frac12 A(x)\Delta_\Gamma \ffi(x)+B(x)\cdot \nabla_\Gamma\ffi(x)-C(x)\ffi(x).
\end{align*}
Then the closure of $(L, C^3(\Gamma))$ generates a strongly continuous contraction semigroup $(T_t)_{t\geq0}$ on $C(\Gamma)$. Let now the semigroup $(T^f_t)_{t\geq 0}$ be subordinate   to the semigroup $(T_t)_{t\geq 0}$ with respect to a given convolution semigroup $(\eta_t)_{t\geq 0}$ associated to a Bernstein function $f$ defined by a triplet $(\sigma,\lambda,\mu)$. The statement below follows immediately from Proposition \ref{prop:SWW}, Theorems \ref{TH5.1}, \ref{thm:Technomag mult pert}, \ref{thm:ChFam_knownSub-r}, \ref{thm:ChFam_known bdd Levy} and results of \cite{MR2423533}. 
\begin{theorem}
(i) For each of the the family   $(F_i(t))_{t\geq0}$, $i=1,2,3$, of Proposition \ref{prop:SWW} define  as in  Theorem \ref{thm:Technomag mult pert} the  families $(\widetilde{F}_i(t))_{t\geq0}$, $i=1,2,3$, of  contractions on $C(\Gamma)$ by 
$$
\widetilde{F}_i(t)\ffi(x):=(F_i(A(x)t)\ffi)(x).
$$
Further, define the families $(\widehat{F}_i(t))_{t\geq0}$, $i=1,2,3$, by
\begin{align*}
 \widehat{F}_i(t)\ffi(x):&=\left( e^{-tC}\circ S_t\circ \widetilde{F}_i(t)\right)\ffi(x) =\\
 &
 =\frac{\int_\Gamma e^{-tC(x)}K_i(A(\gamma^x(t))t,\gamma^x(t),y)\ffi(y)\vol_\Gamma(dy)}{\int_\Gamma K_i(A(\gamma^x(t))t,\gamma^x(t),y)\vol_\Gamma(dy)}.
 \end{align*}
 Then by Proposition \ref{prop:SWW},  Theorem \ref{TH5.1} and Theorem \ref{thm:Technomag mult pert}   the families $(\widehat{F}_i(t))_{t\geq0}$, $i=1,2,3$, are Chernoff equivalent to the semigroup $(T_t)_{t\geq0}$ on $C(\Gamma)$.
 
 \noindent (ii)   Under assumptions and notations of  Theorem  \ref{thm:ChFam_knownSub-r} the families  $(\mathcal{F}^i(t))_{t\geq0}$, $k=1,2,3$,  constructed as in Theorem \ref{thm:ChFam_knownSub-r} with the help of $(\widehat{F}_i(t))_{t\geq 0}$ given above,  are Chernoff equivalent to the semigroup $(T^f_t)_{t\geq 0}$ and have the following explicit view: $\mathcal{F}^i(0):=\id$ and 
 \begin{align*}
 &\mathcal{F}^i(t)\ffi(x)=e^{-\sigma t}\intl_{0+}^\infty\,\intl_{\Gamma^{m(t)+1}}\exp\bigg( -\lambda t C(x)-\frac{s}{m(t)}\suml_{k=1}^{m(t)} C(x_{k+1})   \bigg)\ffi(x_1)\times\\
   &
 \times \prodl_{k=1}^{m(t)} \frac{ K_i\big(A(\gamma^{x_{k+1}}(s/m(t)))s/m(t),\, \gamma^{x_{k+1}}(s/m(t)),\,x_{k}    \big)}{ \int_\Gamma  K_i\big(A(\gamma^{x_{k+1}}(s/m(t)))s/m(t),\, \gamma^{x_{k+1}}(s/m(t)),\,x_{k}    \big) \vol_\Gamma(dx_{k}) }\times\\
 &
 \times  \frac{ K_i\big(A(\gamma^{x}(\lambda t))\lambda t,\, \gamma^{x}(\lambda t),\,x_{m(t)+1}    \big)}{ \int_\Gamma  K_i\big(A(\gamma^{x}(\lambda t))\lambda t,\, \gamma^{x}(\lambda t),\,x_{m(t)+1}    \big) \vol_\Gamma(dx_{m(t)+1}) }\prodl_{k=1}^{m(t)+1}\vol_\Gamma(dx_k)\eta^0_t(ds).
 \end{align*}
 
 \noindent (iii) Under assumptions and notations of Theorem \ref{thm:ChFam_known bdd Levy} tha families $(\mathcal{F}^i_\mu(t))_{t\geq0}$, $k=1,2,3$,  constructed as in Theorem \ref{thm:ChFam_knownSub-r} with the help of $(\widehat{F}_i(t))_{t\geq 0}$ given above,  are Chernoff equivalent to the semigroup $(T^f_t)_{t\geq 0}$ and have the following explicit view: $\mathcal{F}^i_\mu(0):=\id$ and 
 \begin{align*}
& \mathcal{F}^i_\mu(t)\ffi(x)=\\
&
=e^{-\sigma t-\lambda t C(x)}\intl_\Gamma \Bigg(\ffi(x_{m(t)+1})+t\intl_{0+}^\infty \Bigg[\intl_{\Gamma^{m(t)}}\exp\bigg( -\frac{s}{m(t)}\suml_{k=1}^{m(t)}C(x_{k+1}) \bigg)\ffi(x_1)\times\\
 &
 \times \prodl_{k=1}^{m(t)} \frac{ K_i\big(A(\gamma^{x_{k+1}}(s/m(t)))s/m(t),\, \gamma^{x_{k+1}}(s/m(t)),\,x_{k}    \big)}{ \int_\Gamma  K_i\big(A(\gamma^{x_{k+1}}(s/m(t)))s/m(t),\, \gamma^{x_{k+1}}(s/m(t)),\,x_{k}    \big) \vol_\Gamma(dx_{k}) }\prodl_{k=1}^{m(t)}\vol_\Gamma(dx_k)-\\
 &
  - \ffi(x_{m(t)+1})\Bigg]\mu(ds)\Bigg)\frac{ K_i\big(A(\gamma^{x}(\lambda t))\lambda t,\, \gamma^{x}(\lambda t),\,x_{m(t)+1}    \big)\vol_\Gamma(dx_{m(t)+1})}{ \int_\Gamma  K_i\big(A(\gamma^{x}(\lambda t))\lambda t,\, \gamma^{x}(\lambda t),\,x_{m(t)+1}    \big) \vol_\Gamma(dx_{m(t)+1}) }.
 \end{align*}
\end{theorem}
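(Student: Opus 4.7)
The plan is to reduce everything, in several stages, to the tools already built in the paper: Proposition \ref{prop:SWW} (Chernoff equivalence of the pseudo-Gaussian kernels to the heat semigroup), Theorem \ref{thm:Technomag mult pert} (multiplicative perturbation by a positive function), Theorem \ref{TH5.1} (composition of Chernoff families), and then Theorems \ref{thm:ChFam_knownSub-r} and \ref{thm:ChFam_known bdd Levy} (subordination). Part (i) is the only place where genuine manifold-analytic content enters; once it is in hand, parts (ii) and (iii) are obtained by direct substitution.

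For part (i), I would proceed in three stages. First, by Proposition \ref{prop:SWW} each $(F_i(t))_{t\geq 0}$ is Chernoff equivalent to the heat semigroup $(e^{\frac{t}{2}\Delta_\Gamma})_{t\geq 0}$ on $C(\Gamma)$ with a common core $C^3(\Gamma)$. Applying Theorem \ref{thm:Technomag mult pert} with the bounded strictly positive function $A(\cdot)$ yields that $(\widetilde{F}_i(t))_{t\geq 0}$ defined by $\widetilde{F}_i(t)\ffi(x) = (F_i(A(x)t)\ffi)(x)$ is Chernoff equivalent to the strongly continuous semigroup generated by the closure of $\tfrac12 A(\cdot)\Delta_\Gamma$ on $C^3(\Gamma)$. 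Second, the family $(S_t)_{t\geq 0}$ is by construction a strongly continuous contraction semigroup on $C(\Gamma)$ with generator extending $B\cdot \nabla_\Gamma$ on $C^3(\Gamma)$ (this is the content of the cited result of \cite{MR2423533}), and $(e^{-tC})_{t\geq 0}$ is a bounded strongly continuous contraction semigroup with bounded generator $-C$. Third, apply Theorem \ref{TH5.1} to the three families $(\widetilde{F}_i(t))$, $(S_t)$, $(e^{-tC})$ with common core $D=C^3(\Gamma)$: since $L = \tfrac12 A\Delta_\Gamma + B\cdot\nabla_\Gamma - C$ is closable on $C^3(\Gamma)$ with closure generating $(T_t)_{t\geq 0}$, the composed family $\widehat{F}_i(t) = e^{-tC}\circ S_t \circ \widetilde{F}_i(t)$ is Chernoff equivalent to $(T_t)_{t\geq 0}$.

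For parts (ii) and (iii), I would simply feed the Chernoff equivalent family $(\widehat{F}_i(t))_{t\geq 0}$ from part (i) into Theorem \ref{thm:ChFam_knownSub-r} and Theorem \ref{thm:ChFam_known bdd Levy} respectively, with Banach space $X = C(\Gamma)$ and core $D = C^3(\Gamma)$. The explicit formulae for $\mathcal{F}^i(t)$ and $\mathcal{F}^i_\mu(t)$ are obtained by writing out the iterated integrals produced by $[\widehat{F}_i(s/m(t))]^{m(t)}$ and $\widehat{F}_i(\lambda t)$, then plugging these into \eqref{eq:mathcal(F)}, respectively into \eqref{eq:mathcal(F)mu}; at each iteration the kernel $K_i$ must be renormalised by the denominator $\int_\Gamma K_i(\cdots)\vol_\Gamma$ inherited from $\widehat{F}_i$. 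This is purely bookkeeping, though one must pay attention to the ordering of variables and of the base points $\gamma^{x_{k+1}}(s/m(t))$.

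The only step with substantive mathematical content is the closability/core statement used in the second stage: namely that $C^3(\Gamma)$ is a core for the generator of $(T_t)_{t\geq 0}$. This is exactly what is assumed (and used) in \cite{MR2423533}, so it is legitimate to invoke it here. The only mild subtlety is to verify the hypotheses of Theorem \ref{TH5.1}, in particular that $C^3(\Gamma)$ is a core for each of the three summand operators $\tfrac12 A\Delta_\Gamma$, $B\cdot\nabla_\Gamma$ and $-C$ separately; boundedness of $-C$ handles the third, and for the first two this is again covered by the smoothness of $A$, $B$ together with compactness of $\Gamma$. Once these core conditions are in place, the rest of the argument is a mechanical assembly, and the explicit integral representations stated in (ii) and (iii) follow by induction on the number of iterates and Fubini.
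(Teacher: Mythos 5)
Your proposal is correct and takes essentially the same route the paper intends: Proposition~\ref{prop:SWW} combined with Theorem~\ref{thm:Technomag mult pert} for the variable-coefficient Laplace--Beltrami part, Theorem~\ref{TH5.1} to compose with $(S_t)_{t\geq0}$ and $(e^{-tC})_{t\geq0}$, and then Theorems~\ref{thm:ChFam_knownSub-r} and~\ref{thm:ChFam_known bdd Levy} for the subordinated semigroup, with the explicit formulae obtained by bookkeeping of the iterated kernels. The one imprecision is calling $(S_t)_{t\geq0}$ a semigroup --- since $\dot{\gamma}^x(t)\neq B(\gamma^x(t))$ in general it is only a strongly continuous family of contractions that is Chernoff equivalent to the semigroup generated by the closure of $B\cdot\nabla_\Gamma$ --- but that is exactly the role Theorem~\ref{TH5.1} requires of it, so nothing in the argument breaks.
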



 \bibliographystyle{abbrv}
 \bibliography{yana15-bib}

\end{document}